\newtheorem{lemma}{Lemma}
\newtheorem{definition}{Definition}
\newtheorem{claim}{Claim}
\newtheorem{theorem}{Theorem}
\newtheorem{proposition}{Proposition}
\newtheorem{remark}{Remark}
\newtheorem*{main-conjecture}{Main conjecture}
\newtheorem{question}{Question}
 \def\RR{{\mathbb R}}
 \def\la{\lambda}
  \def\al{\alpha}
  \def\si{\sigma}
 \newcommand{\diff}{\operatorname{Diff}}
\begin{document}

\title{Classification of partially hyperbolic diffeomorphisms under some rigid conditions}
\author[1]{Pablo D. Carrasco \thanks{pdcarrasco@mat.ufmg.br}}
\author[2]{Enrique Pujals \thanks{epujals@gc.cuny.edu}}
\author[3]{Federico Rodriguez-Hertz \thanks{hertz@math.psu.edu}}
\affil[1]{ICEx-UFMG, Avda. Presidente Antonio Carlos 6627, Belo Horizonte-MG, BR31270-90}
\affil[2]{CUNY Graduate Center, 365 Fifth Avenue, Room 4208, New York, NY10016}
\affil[3]{Penn State, 227 McAllister Building, University Park, State College, PA16802}

%\author{Pablo D. Carrasco,\,\,\, Enrique Pujals,\,\,\, Federico Rodriguez-Hertz}

\maketitle

\begin{abstract} Consider a three dimensional partially hyperbolic diffeomorphism. It is proved that under some rigid hypothesis on the tangent bundle dynamics, the map is (modulo finite covers and iterates) either an Anosov diffeomorphism, a (generalized) skew-product or the time-one map of an Anosov flow, thus recovering a well known classification conjecture of the second author to this restricted setting.
\end{abstract}

\section{Introduction and Main Results}

Let $M$ be a manifold. One of the central tasks in global analysis is to understand the structure of $\diff^r(M)$, the group of diffeomorphisms of $M$. This is of course a very complicated matter, so to be able to make progress it is necessary to impose some reductions. Typically, as we  do in this article, the reduction consists of studying meaningful subsets in  $\diff^r(M)$, and try to classify or characterize elements on them.   

We will consider partially hyperbolic diffeomorphisms acting on three manifolds. We choose to do so due to their flexibility (linking naturally algebraic, geometric and dynamical aspects), and because of the large amount of activity that this particular research topic has nowadays. Let us spell the precise definition that we adopt here, and refer the reader to \cite{CHHU,HPSurv} for recent surveys.

\begin{definition}
A diffeomorphism of a compact manifold $f:M\rightarrow M$ is partially hyperbolic if there exist a Riemannian metric on $M$ and a decomposition $TM=E^s\oplus E^c\oplus E^u$ into non-trivial continuous bundles satisfying for every $x\in M$ and every unit vector $v^{\sigma}\in E^{\sigma},\ \sigma=s,c,u$,
\begin{itemize}
\item $\|D_xf(v^{s})\|<1,\ \|D_xf(v^{u})\|>1$.
\item $\|D_xf(v^{s})\|<\|D_xf(v^{c})\|<\|D_xf(v^{u})\|$.	
\end{itemize} 
\end{definition}

The set of partially hyperbolic diffeomorphisms on $M$ is a $\mathcal{C}^1$ open set in $\diff^r(M)$. From now on, let $M$ be a three dimensional compact orientable\footnote{By passing to a double cover, this is no loss of generality.} manifold.

We briefly recall some different classes of examples.

\begin{itemize}
 \item Algebraic and geometric constructions. Including, 
   \begin{itemize}
   \item hyperbolic linear automorphisms in the three torus;
   \item skew-products, or more generally circle extensions of Anosov surface maps. By this we mean that there exists a smooth fibration $\pi: M\to \mathbb{T}^2$ with typical fiber $\mathbb{S}^1$, $f$ preserves fibers and the induced map by $f$ on $\mathbb{T}^2$ is Anosov, or
   \item time-one maps of Anosov flows that are either suspensions of hyperbolic surface maps or mixing flows, as the geodesic flow acting on (the unit tangent bundle of) an hyperbolic surface.
  \end{itemize}
 \item Surgery and blow up constructions (which includes the construction of non-algebraic Anosov flows, see \cite{BPP, BGP}). 
\end{itemize}

The motivating question is the following.

\begin{question}
Are the above examples essentially all possible ones, at least modulo isotopy classes? More precisely, is it true that if $f:M\rightarrow M$ is a partially hyperbolic diffeomorphism then it has a finite cover $\widetilde{f}:M\rightarrow M$ (necessarily partially hyperbolic) that is isotopic to one of the previous models? 	
\end{question}

Observe that forgetting the surgery constructions, the first two classes have simple representatives, namely maps whose derivative is constant (with respect to the invariant directions). For example, when $S$ is a compact surface of negative sectional curvature its tangent bundle is an homogeneous space $M=\Gamma\backslash  PSL(2,\RR)$ and the geodesic flow on $M$ is given by right multiplication by $\displaystyle{\begin{pmatrix}
\exp(-\frac{1}{2}t) & 0\\
0 & \exp(\frac{1}{2}t)  \end{pmatrix}}$, so the derivative of each $t$-time map is constant.

In this note we make a contribution to answering the previous question and classify smooth partially hyperbolic maps with constant derivative, or more generally, with constant exponents. A tentative classification of some sort is highly desirable, even in this simplified setting. In that direction,  a classification conjecture by the second author was formulated in 2001 (\cite{Tranph}) and extended  by a modified (weaker) classification conjecture in 2009 due to the third author, J. Rodriguez-Hertz and R. \'{U}res (\cite{CHHU}). Both conjectures turned to be false as proven recently by C. Bonatti, A. Gogolev, K. Parwani and R. Potrie \cite{BPP,BGP}, but as byproduct of the proof, a new zoo of examples was discovered giving another impulse to the research in the topic. Our objective in this paper is then two-fold: on the one hand, prove the above mentioned conjecture in some rigid context and from there, to propose a new possible scheme to classify partially hyperbolic diffeomorphisms on three manifolds, and on the other hand, leave open some questions that may lead to interesting answers.

Given $f:M\rightarrow M$ partially hyperbolic, modulo a finite covering one has that $E^{\sigma}(x)$ is generated by a unit vector field $x\to e_{\sigma}(x)\in \mathbb{R}^3$ for $\sigma=s,c,u$; in other words, there is a finite covering $\hat M$ such that each sub-bundle lifts to an orientable one and therefore the derivative of the lift of $f$ to $\hat M$ acting on $T\hat M$ (that we keep denoting by $f$) can be diagonalized, and so the derivative cocyle $x\to D_xf $ is a cocycle of diagonal elements of $Gl(3,\mathbb{R})$. We denote by $\la_s(x),\la_c(x),\la_u(x)$ the associated eigenvalues. We  say that $f$ has constant exponents if these eigenvalues do not depend on $x$.  Observe that the there are examples of Anosov diffeomorphisms, skew-products over Anosov and Anosov flows (either as suspensions of an Anosov diffeomormorphisms or as Anosov geodesic flows) satisfying that their eigenvalues are constant and having smooth ($\mathcal{C}^{\infty}$) distributions.

\begin{remark}
Notice that our definition of $f$ having constant exponents depends on the chosen metric ($\la(fx)=D_{x}f(e_{\sigma}(x))$). It was pointed out to us by the referee that one can make the definition metric independent by requiring that the (logarithm of the) exponents to be (differentiably) cohomologous to constant. In any case we will work with the metric making all the exponents constant. 
\end{remark}

\begin{theorem}\label{mainth} Let $f$ be a partially hyperbolic $C^{\infty}$ diffeomorphism on a compact orientable 3-manifold $M$ with constant exponents and smooth invariant distributions.

\begin{itemize}
\item If $|\la_c|>1$ then $f$ is $\mathcal{C}^{\infty}$ conjugate to a linear Anosov on $\mathbb{T}^3$.
\item If $|\la_c|=1$ and $f$ is either transitive or real analytic, then there is a finite covering of $M$ such that an iterate of the lift of $f$ is 
\begin{itemize} 
\item $\mathcal{C}^{\infty}$ conjugate to a circle extension of an Anosov linear map, or 
\item a time $t-$map of an Anosov flow of the one following types   

\begin{itemize}
\item the suspension of a two dimensional smooth Anosov map, or
\item $\mathcal{C}^\infty$ leaf conjugate to the geodesic flow of a surface with constant negative curvature, meaning that there exists a smooth diffeomorphism sending the orbits of this Anosov flow to the orbits of the diagonal action on\footnote{Here $\widetilde{SL}(2,\RR)$ denotes the universal covering of $SL(2,\RR)$}  $\Gamma\backslash  \widetilde{SL}(2,\RR)$, for some co-compact lattice $\Gamma$.
\end{itemize}
\end{itemize}
\end{itemize}
\end{theorem}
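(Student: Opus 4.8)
The plan is to organize the argument around the value of $|\la_c|$, after a harmless reduction: if $|\la_c|<1$ one runs the whole argument for $f^{-1}$, which is again partially hyperbolic with constant exponents and smooth distributions and has center exponent $|\la_c|^{-1}>1$, so conclusions for $f^{-1}$ transfer to $f$. Thus assume $|\la_c|\ge 1$. Suppose first $|\la_c|>1$. Since the exponents are constant with $|\la_s|<1<|\la_c|<|\la_u|$, the $Df$-invariant splitting $TM=E^s\oplus(E^c\oplus E^u)$ is uniformly hyperbolic: $E^s$ is uniformly contracted, and because the angle between the transverse continuous bundles $E^c$ and $E^u$ is bounded below, $E^{cu}=E^c\oplus E^u$ is uniformly expanded. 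Hence $f$ is an Anosov diffeomorphism with one dimensional stable bundle, i.e. codimension one Anosov. By the theorem of Franks and Newhouse, $M$ is homeomorphic to $\TT^3$ and $f$ is topologically conjugate to a linear hyperbolic automorphism $A$. To promote the conjugacy to $\mathcal C^\infty$ I would exploit the rigidity in the hypotheses: smoothness of the one dimensional foliations $W^s,W^c,W^u$ together with constancy of the exponents endows each leaf with an $f$-invariant affine structure in which $f$ acts by $t\mapsto\la_\sigma t$; the resulting smoothness of the holonomies along a pair of transverse smooth foliations lets a Livšic/Journé type argument upgrade the topological conjugacy with $A$ to a smooth one.

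Assume now $|\la_c|=1$. Replacing $f$ by $f^2$ we may take $\la_c=1$, so that the unit center field $e_c$ is genuinely $Df$-invariant, $D_xf(e_c(x))=e_c(fx)$. Consequently the smooth flow $\phi$ generated by $e_c$ commutes with $f$, and $f$ is an automorphism of $\phi$ preserving the center foliation $W^c$ (the orbit foliation of $\phi$) and the arc length along its leaves. The analysis then splits according to the geometry of $W^c$: either all of its leaves are compact circles or none is. This is one place where transitivity (or analyticity) enters, since the set of points lying on compact center leaves is closed and invariant under both $f$ and $\phi$. In the compact case, after a finite cover and an iterate the circles have uniformly bounded length and assemble into a smooth circle fibration $\pi:M\to B$ over a surface with the center orbits as fibers; $f$ descends to $\bar f$ on $B$, and the projections of $E^s,E^u$ give $\bar f$ a hyperbolic splitting, so $\bar f$ is Anosov with constant exponents and smooth foliations. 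By surface Anosov rigidity, $B=\TT^2$ and $\bar f$ is $\mathcal C^\infty$-conjugate to a linear Anosov map, exhibiting $f$ as a circle extension of a linear Anosov map.

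In the non-compact case I would first show that $\phi$ itself is an Anosov flow, with weak stable and weak unstable bundles $E^s\oplus E^c$ and $E^u\oplus E^c$. The delicate step, which I expect to be the main obstacle, is to transfer hyperbolicity from $f$ to $\phi$: a priori $\phi$ need not contract $E^s$, and one must use that the contraction data are $f$-invariant, that $\phi$ commutes with $f$, and transitivity, to deduce uniform contraction and expansion of $E^s$ and $E^u$ along $\phi$. Once $\phi$ is Anosov its invariant bundles are smooth, placing us in the rigidity regime for Anosov flows: either $\phi$ admits a global cross section, in which case it is the suspension of a two dimensional smooth Anosov map, or it does not, in which case smoothness of the invariant bundles together with the constant exponents forces, by the classification of Anosov flows with smooth invariant distributions in the spirit of Ghys and of Benoist--Foulon--Labourie, that $\phi$ is $\mathcal C^\infty$ leaf conjugate to the geodesic flow of a constant negative curvature surface, i.e. the diagonal action on $\Gamma\backslash\widetilde{SL}(2,\RR)$.

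It remains to identify $f$ with a time $t$-map of $\phi$, which amounts to showing that $f$ preserves every center orbit, equivalently that the homeomorphism induced by $f$ on the space of center leaves is trivial after passing to a finite cover and an iterate. This is the second serious point: $f$ commutes with $\phi$ and permutes its orbits, and one must rule out nontrivial permutations by analyzing the centralizer of the (transitive) Anosov flow $\phi$, again using transitivity or analyticity and the rigidity just invoked. Granting that $f$ fixes each orbit, then since $f$ preserves the arc length parameter it acts on every orbit as a translation $\phi_\tau$ whose amount $\tau$ is $\phi$-invariant, hence constant; thus $f=\phi_\tau$ is the time $\tau$ map of the Anosov flow $\phi$, completing the classification. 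The two hardest points are therefore the promotion of $f$-hyperbolicity to hyperbolicity of the commuting flow and the verification that $f$ is, up to finite cover and iterate, a genuine time $t$-map rather than merely an automorphism of the flow.
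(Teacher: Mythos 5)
Your overall architecture matches the paper's (reduce to $|\la_c|\ge 1$; Anosov case; for $|\la_c|=1$ a dichotomy leading to a circle extension or to a time map of an Anosov flow), and you correctly locate the two hard points --- but you leave both unproved, and you miss the single mechanism the paper uses to resolve them. That mechanism is the commutation relation $f^n\circ\phi^\sigma_t=\phi^\sigma_{\la_\sigma^n t}\circ f^n$ read at the level of derivatives in the invariant frame $\{e^u,e^s,e^c\}$: writing $B^\sigma(x,t)$ for the matrix of $D_x\phi^\sigma_t$ in that frame, one gets $A^n\cdot\partial_tB^\sigma(x,t)|_{t=t_0/\la_c^n}\cdot A^{-n}=\la_\sigma^n\,\partial_tB^\sigma(f^n x,t)|_{t=t_0}$. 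In the case $|\la_c|>1$, letting $n\to\infty$ and comparing growth rates forces $\partial_tB^\sigma\equiv 0$ along a dense orbit, hence everywhere; so all holonomies are trivial and $f$ splits as a product of three one-dimensional maps in the foliation coordinates. Your appeal to a ``Liv\v{s}ic/Journ\'e type argument'' skips exactly this step, and you also omit the verification (via topological entropy and the image of $\mathcal{F}^c$ under the topological conjugacy) that the exponents of $f$ agree with those of its linear part, without which no smooth conjugacy can exist.

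In the case $\la_c=1$ the same relation gives $A^n\cdot B^c(x,t)\cdot A^{-n}=B^c(f^n x,t)$, hence (all matrices being diagonal once one shows $D\phi^c_t$ preserves the three bundles) $B^c(x,t)=B^c(f^n x,t)$; transitivity (dense orbit) or real analyticity (recurrent orbit plus the zeros theorem) then makes $B^c(x,t)$ constant in $x$. This --- not a compact-versus-noncompact leaf dichotomy --- is where the transitivity/analyticity hypothesis enters, and it produces the actual alternative: the constant diagonal conservative matrix $B^c(\cdot,t)$ is either the identity or hyperbolic transverse to the center, in which case $\phi^c_t$ is Anosov. This is precisely the ``delicate step'' you flag and do not carry out. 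Your substitute dichotomy is also shaky: the set of points on compact center leaves is not obviously closed (leaf lengths could a priori blow up); the paper instead proves it is \emph{open} (the return map to a transversal has derivative equal to the identity, so a fixed point forces it to be the identity near the section) and concludes by density of recurrent points. Finally, your last step --- that $f$ fixes every orbit of $\phi^c_t$ and hence is a time map --- is again asserted via ``analyzing the centralizer'' with no argument; the paper proves it by fixing a shortest closed orbit $O(p)$, introducing linearizing coordinates on $W^{s}_{loc}(O(p),\phi^c_t)$ and $W^{u}_{loc}(O(p),\phi^c_t)$, showing through a boundedness estimate on transition times that the homoclinic orbits between the chosen fundamental domains are fixed by the relevant iterate, and concluding $f^k=\phi^c_{T/k}$ on the dense set $W^s(O(p),\phi^c_t)\cup W^u(O(p),\phi^c_t)$. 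As it stands your proposal is a plausible outline in which the two decisive lemmas are missing.
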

A sketch of the proof is presented at the beginning of next section.

\begin{remark}
The above theorem implies that under its hypotheses Question 1 has an affirmative answer.
\end{remark}

\begin{question} \label{generalization} Can we get a similar theorem assuming (only) smoothness of the foliations?
\end{question}

Our theorem reveals some inherent rigidity of systems with constant exponents. The reader should compare Theorem 1 with \cite{AVW, Gogolev2017, Gogolev2019, SY}, where rigidity results are obtained for some perturbations of the listed maps (time-one maps of geodesic flows, Anosov diffeomorphisms and skew-products).

\smallskip

About the tentative classification without any extra assumption beyond partial hyperbolicity, it has been proved recently (see also \cite{Po}):
 
\begin{itemize}
 \item[--] Partially hyperbolic diffeomorphims in Seifert and Hyperbolic manifolds are conjugate to a discretized topological Anosov flow (see \cite{BFFP}); also it was announced by R. \'{U}res when $M= T^1S$ ($S$ is a surface) assuming that $f$ is isotopic to the geodesic flow through a path of partially hyperbolic diffeomorphisms.

\item[--]  If $M$ is a manifold with (virtually) solvable fundamental group an $f-$invariant center foliation, then (up to finite lift and iterate) it is leaf conjugate to an algebraic example (see \cite{HP1, HP2}).

\item[--] In \cite{BPP,BGP}, using surgery it was constructed a large family of new partially hyperbolic examples that are not isotopic to any one in the thesis of theorem \ref{mainth}. See also the blow-up constructions in \cite{Gogolev2016}.
\end{itemize}

\begin{question} How does Theorem \ref{mainth} relate to the above mentioned recent results?\footnote{After completing this manuscript we received a preprint from C. Bonatti and J. Zhang where they obtain a $\mathcal{C}^0$ rigidity result assuming neutral center \cite{BZ}.}
 
\end{question}

Related to a general classification, would it be possible that the rigid ones are kind of ``building blocks'' from where any 3-dimensional partially hyperbolic one ``is built''?

\begin{question} Given an compact orientable three manifold $M$ and $f:M\rightarrow M$  partially hyperbolic, is it true that $M$ ``can be cut'' into finitely many (manifold with boundary) pieces $M_1,\ldots, M_k$ such that  $M_i$ is an open submanifold in a compact three manifold $\hat M_i$ carrying $f_i\in\mathcal{PH}(\hat M_i)$ with constant exponents, and so that $f|M_i$ is isotopic (relative to  $M_i$) to $f_i|\hat M_i$?
\end{question}

\section{Proof of the Main Result.}

To avoid repetition, from now on we assume that all the sub-bundles are orientable and that we are working in the corresponding lift (as it was mentioned before its statement, the main theorem holds up to a finite covering).

Since the distributions $E^{\sigma}$ are differentiable, they are uniquely integrable to one dimensional foliations $\mathcal{F}^{\sigma}$ of $\mathcal{C}^{\infty}$ leaves.  Consider the orthonormal invariant (ordered) base $\mathcal{B}(x)=\{e^u(x),e^s(x),e^c(x)\}$ referred in the introduction, and denote by $A(x)$ the associated matrix to $D_xf$ in the bases $\mathcal{B}(x),\mathcal{B}(fx)$. By hypotheses $A(x)=A\in Gl(3,\RR)$ is diagonal, hence it is partially hyperbolic with  determinant $\pm 1$, and thus is an hyperbolic matrix or it has one eigenvalue of modulus one. In the former case $f$ is Anosov, while in the later $f$ acts as an isometry on its center.

Let $\phi_t^s,\phi^c_t,\phi_t^u$ be the flows that integrate the bundles $E^s,E^c,E^u$ parameterized by arc-length (in short, we refer them as $\phi^\sigma_t$ with $\sigma=s,c,u$). By hypotheses, these are $\mathcal{C}^{\infty}$ flows.

\begin{question}
Is the smoothness hypothesis on the bundles necessary in presence of constant exponents?
\end{question}

The poof of the theorem goes at follows. If $|\la_c|\neq 1$, $f$ is Anosov and it is  constructed global $\mathcal{C}^{\infty}$ coordinates to show that $f$ is $\mathcal{C}^{\infty}$ conjugate to a linear Anosov map; if $|\la_c|=1$ by the commutation of $\phi^c_t$ and $f$ (see equations \ref{comm2} and  \ref{comm3}) it follows that  $D\phi^c_t$ is constant in the corresponding $f$-invariant base (see lemma \ref{constant}), and therefore it is either the identity or partially hyperbolic. In the first case all the center leaves are compact and then $f$ is an extension of a two dimensional Anosov (see Proposition \ref{identity}), while in the second $\phi^c_t$ is an Anosov flow and there is $T$ such that $f=\phi_T^c$ (see lemma \ref{differential}). Moreover, by \cite{ghyss} it holds that $\phi^c_t$ is (modulo coverings and reparametrizations) the geodesic flow of surface with constant negative curvature or the suspension of a linear Anosov with constant time. We point out that in \cite {Tranph} it is concluded that under transitivity, a three dimensional partially hyperbolic is either a skew-product or an Anosov flow, assuming the existence of certain type of periodic trajectories for $\phi^c_t$ and some properties on the dynamics of the homoclinic points associated to these periodic orbits. 

For perturbations of the linear Anosov map, the same result may also be obtained by using the first theorem in \cite{SY} once it is shown that the exponents of the Anosov and its linear part are the same, which can be deduced from quasi-isometry of the foliations. A different approach to prove smooth conjugacy to a linear Anosov model was developed in \cite{V}, that uses smoothness of the center foliation plus extra requirements about the stable/unstable holonomies; to apply that approach one may have to establish that the hypotheses of our main theorem imply the requirements of \cite{V}, which doesn't seem to be direct. Other result related to the case that $|\lambda_c|=1$ is the one proved in \cite{AVW}: any partially hyperbolic diffeomormorphisms (that preserves a Liouville probability measure) close to the time-one map of a geodesic flow of a  negative curved surface with a smooth center foliation is the time-one map of a flow (close to the geodesic flow).

Given $x$, since $f$ preserves the three foliations, it holds that 
\begin{eqnarray*}\label{comm1}
 f\circ \phi^\si_t(x)= \phi^\si_{\lambda_\si.t}\circ f(x),
\end{eqnarray*}
where $\lambda_\si$ is the  eigenvalue of $Df$ along $E^\si$. The same equations leads to 
 \begin{eqnarray}\label{comm2}
  f^n\circ \phi^\si_t(x)= \phi_{\lambda_\si^n.t}\circ f^n(x).
\end{eqnarray}
In particular, it holds that

\begin{equation}\label{comm3}
   D_{\phi^\si_t(x)}f^n\circ D_x\phi^\si_t= D_{f^n(x)}\phi^\si_{\lambda_\si^n.t}\circ D_xf^n.
   \end{equation}
Differentiating \eqref{comm3} with respect to $t$ we get the following equation:

\begin{eqnarray*}\label{comm40}
\partial_tD_{\phi^\si_t(x)}f^n\circ D_x\phi^\si_t+D_{\phi^\si_t(x)}f^n\circ\partial_tD_x\phi^\si_t= \lambda_\si^n. \partial_tD_{f^n(x)}\phi^\si_{\lambda_\si^n.t}\circ D_xf^n, 
\end{eqnarray*}
hence if we denote by $B^\si(x,t)$ the associated matrix to $D_x\phi^\si_t$ in the bases $\mathcal{B}(x)$, $\mathcal{B}(\phi^\si_t(x))$ we obtain, using that the representation of $D_xf^n$ ($=A^n$) is independent of time,  
\[
A^n\cdot\partial_t B^\si(x,t)=\lambda_\si^n\partial_t B^\si(f^n(x),\lambda_\si^n.t)\cdot A^n,
\]
therefore by fixing $t_0$, it holds 
\begin{eqnarray}\label{comm4}
A^n\cdot\partial_tB^\si(x,t)|_{t=\frac{t_0}{\la_c^n}}\cdot A^{-n}=  \lambda_\si^n\partial_tB^\si(f^n(x),t)|_{t=t_0}.
\end{eqnarray}	
Since $D\phi^\si_t(E^\si)=E^\si$, the two non-diagonal terms of the corresponding column of $B^\si(x,t)$ are zero, thus the same is true for $\partial_tB^\si(x,t)$.

We divide the argument into cases depending on whether $\lambda_c>1$ or $\lambda_c=1$.

\subsection{\texorpdfstring{$\lambda_c>1$}: Anosov case}

First we consider the case $\lambda_c>1$.  Clearly, $f$ is Anosov and therefore it is conjugate (in the $\mathcal{C}^0$ category) with its linear part $L:\mathbb{T}^3\rightarrow\mathbb{T}^3$; i.e. there exists  $L \in Sl(3,\mathbb{Z})$ with invariant bundles $E^s_L, E^{c}_L, E^{u}_L$ and exponents $\gamma_s<1<\gamma_c<\gamma_{u}$ conjugate to $f$.  The goal is to show that the conjugacy with the linear part is actually smooth. To do that, it is revisited   the classical result of Franks \cite{franksthe} that use the foliations to build the conjugacy along the following steps:

\begin{itemize}
 \item[--] it is  considered the lift of $f$ to $\RR^3$, which after conjugating by a translation can be assumed that $f(0)=0$ and the lifts of the foliations that integrates the invariant sub-bundles; those foliations,   provide a $\mathcal{C}^{\infty}$ system of coordinates; i.e., any point $x$ can be written as $(x^s, x^c, x^u)$ with $x^\si\in {\mathcal F}^\sigma(0)$ (the invariant leaves at the point $(0,0,0))$;
 
  \item[--] it is shown that $f$ can be ``linearized'', in the sense that $f$ can be written as $f(x^s, x^c, x^u)= (f^s(x^s), f^c(x^c), f^u(x^u))$ where $f^\sigma: {\mathcal F}^\sigma(0) \to {\mathcal F}^\sigma(0)$ is a smooth diffeomorphism;

   \item[--] each one dimensional diffeomorphism $f^\si$ is $\mathcal{C}^{\infty}$ conjugate to $L|{E^\si_L}$  by a $\mathcal{C}^{\infty}$ diffeomorphism $h^\sigma: {\mathcal F}^\si(0)\to E^\si_L;$
   
   \item[--] the $\mathcal{C}^{\infty}$ diffeomorphism $h=(h^s, h^c, h^u)$  is a conjugacy between $f$ and $L.$

\end{itemize}

For the first part, we first remark that as consequence of the classical stable manifold theorem, the bundle $E^c\oplus E^{u}$ is also integrable to an $f$-invariant foliation ${\mathcal F}^{cu}$, the center unstable foliation. In the lift to $\RR^3$, for any point $x$ there are unique points $x^s\in {\mathcal F}^s(0)$ and $x^{cu}\in {\mathcal F}^{cu}(0)$ such that $x\in {\mathcal F}^{cu}(x^s)\cap {\mathcal F}^{s}(x^{cu})$ and any point in $x^{cu}\in {\mathcal F}^{cu}(0)$ there are  unique points $x^c\in {\mathcal F}^c(0)$ and $x^{u}\in {\mathcal F}^{u}(0)$ such that $x^{cu}\in {\mathcal F}^{u}(x^c)\cap {\mathcal F}^{c}(x^{u})$. On that way, it is obtained a $\mathcal{C}^{\infty}$ system of coordinates and any point can be written as $(x^s, x^c, x^u).$

For the second item, first observe that using the linear coordinates it follows that $f$ is expressed as $f(x^s, x^c, x^u)= (f^s(x^s, x^c, x^u), f^c(x^s, x^c, x^u), f^u(x^s, x^c, x^u));$ so, the goal is to show that $f^\si$ only depends on the $x^\si-$coordinate. For that it is enough to show that all the holonomies preserve the invariant sub-bundles and this is done showing the derivative of  $\phi_t^\si$ are the identity.
We'll consider $\sigma=c$, as the other cases are completely analogous. Writing  $\partial_tB^c(x,t)\Big|_{t=\frac{t_0}{\la_c^n}}=(a_{ij}), \partial_tB^c(f^n(x),t)|_{t=t_0}=(a_{ij}')$
and using  \eqref{comm4} one gets
\[
\begin{pmatrix}
a_{11} & \left(\frac{\la_u}{\la_s} \right)^n\cdot a_{12} & 0\\
\left(\frac{\la_s}{\la_u} \right)^n\cdot a_{21} & a_{22} & 0\\
\left(\frac{\la_c}{\la_u} \right)^n \cdot a_{31} & \left(\frac{\la_c}{\la_s} \right)^n\cdot a_{32} & a_{33}
\end{pmatrix}=\la_c^n\begin{pmatrix}
a_{11}' & a_{12}' & 0\\
a_{21}' & a_{22}' & 0\\
a_{31}' & a_{32}' & a_{33}'
\end{pmatrix}.
\]
Observe that the coefficients $a_{ij}, a_{ij}'$ are bounded with $n$, while  $\partial_tD_x\phi^c_t|_{t=\frac{t_0}{\la_c^n}}\rightrightarrows \partial_tD_{x}\phi^c_{t}|_{t=0}$ uniformly as $n\to\infty$; using this and the relation $\la_s<1<\la_c<\la_u$ one deduces that $\partial_tD_{x}\phi^c_{t}|_{t=0}$ is the zero matrix. Finally, it is well know that $f$ has dense orbits, hence by taking one of these we deduce that $\partial_tB^c(x,t)|_{t=t_0}=0$ for every $x\in M,t_0\in\mathbb{R}$. This implies that $B^c(x,t)$ is the identity matrix for every $t,x$, and in particular $D\phi^c_{t}(E^{\sigma})=E^{\sigma}, \sigma=u,s,c$. 

The argument above works similarly for the flows $\phi^u,\phi^s$, interchanging $\la_c$ by $\la_u,\la_s$ (which are different from one) thus establishing the second item.

To prove the third item, it is enough to show that the eigenvalues of $L$ are the same of $f$:

\begin{claim}
	It holds $\gamma_{u}=\la_u,\gamma_c=\la_c, \gamma_s=\la_s$.
\end{claim}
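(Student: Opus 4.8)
The plan is to exploit the topological conjugacy $h$ between $f$ and its linear part $L$. By Franks's theorem \cite{franksthe} this $h$ is homotopic to the identity, so it lifts to a homeomorphism $H:\RR^3\to\RR^3$ with $H\circ\tilde f=L\circ H$ and $\sup_x\|H(x)-x\|=:C_H<\infty$. Since $h$ carries $\mathcal{F}^\sigma$ onto the linear foliation $\mathcal{F}^\sigma_L$, the lift $H$ sends each leaf of the lifted foliation into an affine line parallel to $E^\sigma_L$, on which $L^n$ acts by multiplication by $\gamma_\sigma^n$. The identity of the exponents will then come from comparing, along one fixed leaf, how $f^n$ stretches arclength (by exactly $\la_\sigma^n$, because the exponents are constant and $\phi^\sigma$ is parameterized by arclength) with how $L^n$ stretches the corresponding affine segment (by exactly $\gamma_\sigma^n$).

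One inequality is essentially free. Fix $x$ and set $y=\phi^u_{t}(x)$; by \eqref{comm2} the points $\tilde f^n(x)$ and $\tilde f^n(y)=\phi^u_{\la_u^n t}(\tilde f^n x)$ lie at intrinsic unstable distance $\la_u^n t$, so their Euclidean distance is at most $\la_u^n t$ (chord $\le$ arc). On the other hand $|H\tilde f^n(x)-H\tilde f^n(y)|=\gamma_u^n\,|H(x)-H(y)|$, and as $H$ is at bounded distance from the identity this differs from $|\tilde f^n x-\tilde f^n y|\le \la_u^n t$ by at most $2C_H$. Letting $n\to\infty$ and taking $\tfrac1n\log$ gives $\gamma_u\le\la_u$; the same computation along $\mathcal{F}^c$ yields $\gamma_c\le\la_c$, and running it for $f^{-n}$ along $\mathcal{F}^s$ yields $\gamma_s\ge\la_s$.

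The substance is the reverse inequality, which amounts to bounding the arclength of a strong leaf by its Euclidean chord; this is where I expect the main difficulty to lie. I would obtain it from the quasi-isometry of the strong foliations in the universal cover, invoking the theorem of Brin--Burago--Ivanov (applicable precisely because $M=\TT^3$ has abelian fundamental group): there are constants $a,b>0$ with $d_{\mathcal{F}^u}(p,q)\le a\,\|p-q\|+b$ for $p,q$ on a common strong unstable leaf, and likewise for $\mathcal{F}^s$. Feeding $p=\tilde f^n x$, $q=\tilde f^n y$ into this estimate bounds $\la_u^n t$ above by $a\bigl(\gamma_u^n|H(x)-H(y)|+2C_H\bigr)+b$, whence $\la_u\le\gamma_u$ and therefore $\gamma_u=\la_u$; the stable estimate gives $\gamma_s=\la_s$ the same way. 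Finally $\gamma_c=\la_c$ follows with no further geometry from the normalization $|\gamma_s\gamma_c\gamma_u|=|\det L|=1=|\det A|=|\la_s\la_c\la_u|$ together with $\gamma_u=\la_u$, $\gamma_s=\la_s$ and $\gamma_c,\la_c>1$.

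Alternatively, the quasi-isometry input can be bypassed. Since $|\det A|=1$ the map $f$ preserves the Riemannian volume $m$, so Pesin's entropy formula gives $h_m(f)=\log\la_u+\log\la_c$ (the exponents being constant), while topological conjugacy and the entropy of a hyperbolic automorphism give $h_m(f)\le h_{\mathrm{top}}(f)=h_{\mathrm{top}}(L)=\log\gamma_u+\log\gamma_c$. Combined with the free bounds $\gamma_u\le\la_u$ and $\gamma_c\le\la_c$ this forces $\gamma_u=\la_u$, $\gamma_c=\la_c$ (and then $\gamma_s=\la_s$), which is perhaps the quickest route and sidesteps the geometric obstacle above.
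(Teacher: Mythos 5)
Your argument is correct in substance, and your closing ``alternative'' is in fact essentially the paper's own proof: the authors simply note that $h_{\mathrm{top}}(f)=h_{\mathrm{top}}(L)$, which (via Pesin/Ruelle for the volume--preserving $f$, applied to both $f$ and $f^{-1}$) yields $\gamma_s=\la_s$ and $\gamma_u\gamma_c=\la_u\la_c$, and they then remove the remaining degree of freedom by using that the conjugacy carries $\mathcal{F}^c$ onto the linear foliation $\{E^c_L+x\}_{x\in\TT^3}$, giving $\gamma_c=\la_c$. Your primary route --- chord-versus-arc for one inequality and Brin--Burago--Ivanov quasi-isometry of the strong foliations for the reverse one --- is genuinely different from the written proof, although the authors themselves flag it as viable (``\ldots which can be deduced from quasi-isometry of the foliations'') in their discussion of the approach via \cite{SY}. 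It buys something real: by working only on the strong leaves $\mathcal{F}^u,\mathcal{F}^s$ and then extracting $\gamma_c=\la_c$ from $|\det L|=|\det A|=1$, you can bypass the one genuinely delicate point on which both your ``free'' inequality $\gamma_c\le\la_c$ and the paper's argument rely, namely that the topological conjugacy sends the \emph{weak} foliation $\mathcal{F}^c$ to its linear counterpart; this is not a formal consequence of conjugacy (only the full stable and unstable sets are topologically characterized, not the finer splitting of the unstable into weak and strong directions) and is asserted without proof in the paper. One small caution on your side: the same caveat applies to your assertion that $H$ sends $\mathcal{F}^u$-leaves into lines parallel to $E^u_L$. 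For the inequalities you actually use this is not needed: it suffices that $\|L^n(H(y)-H(x))\|$ grows at rate at least $\gamma_u^n$ whenever the $E^u_L$-component of $H(y)-H(x)$ is nonzero, and such a pair $x,y$ on a common strong unstable leaf exists because the leaf is quasi-isometrically embedded while $\sup_x\|H(x)-x\|<\infty$; with that adjustment your quasi-isometry route closes cleanly.
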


\begin{proof}
Since the topological entropy of $f$ and $L$ are the same we obtain $\gamma_s=\la_s, \gamma_{u}+\gamma_c=\la_u+\la_c$. Using that the conjugacy between $f$ and $L$ sends 
$\mathcal{F}^c$ to $\{E^c_L+x\}_{x\in\mathbb{T}^3}$, one deduces $\gamma_c=\la_c$ which finishes the claim. 
\end{proof}

Now, one can  define $h^{\sigma}: {\mathcal F}^{\sigma}(0)\rightarrow E^{\sigma}_L$ by
\[
h^{\sigma}(x)=\text{oriented arc length in }W^{\sigma}(0) \text{ of the shortest interval between }0 \text{ and }x. 
\]

Each $h^{\sigma}$ is a $\mathcal{C}^{\infty}$ diffeomorphism, and since all hololonomies corresponding to invariant foliations of $f$ are the identity, they assemble to a $\mathcal{C}^{\infty}$ diffeomorphism  $h:\mathbb{R}^3\rightarrow \mathbb{R}^3$. By the previous claim $h$ conjugates the action of $f$ with $L$ concluding that  {\em $f$ is $\mathcal{C}^{\infty}$ conjugate to its linear part.}

\begin{remark}
If one  assumes that $f$ has constant derivative (i.e.\@ the invariant bundles are constant), then the above argument is simplified concluding that  $f=L$.
\end{remark}

\subsection{\texorpdfstring{$\lambda_c=1$}: generalized skew-products}

Now we consider the case $\lambda_c=1$. As in previous case, it is shown that $D\phi^c_t$ preserves the sub-bundles, however, since now the center eigenvalue is one, it is needed a different proof.

\begin{lemma}
It holds $D\phi^c_t(E^{\sigma})=E^{\sigma}$ for $\sigma=u,s,c$.
\end{lemma}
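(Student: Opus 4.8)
The plan is to specialize the commutation relation \eqref{comm4} to $\si=c$ and exploit that now $\la_c=1$. Since $t_0/\la_c^n=t_0$, for every $x$, every $t_0\in\RR$ and every $n\in\ZZ$ the relation becomes a genuine conjugation by $A^n$, with the scalar factor on the right gone:
\[
A^n\cdot\partial_tB^c(x,t_0)\cdot A^{-n}=\partial_tB^c(f^n(x),t_0).
\]
This is the essential structural difference with the Anosov case: there one let $t_0/\la_c^n\to 0$ and invoked continuity of $\partial_tB^c$ at $t=0$, a device no longer available when $\la_c=1$ freezes the time argument. Instead I would read the conjugation entrywise and use that the three diagonal entries of $A$ now have pairwise distinct moduli. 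For this I also need \eqref{comm4} for negative $n$; that follows by applying $f^{-1}$ to the base commutation $f\circ\phi^\si_t=\phi^\si_{\la_\si t}\circ f$ and iterating, so the identity above holds for all $n\in\ZZ$.

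Write $A=\mathrm{diag}(\mu_1,\mu_2,\mu_3)$ with $(\mu_1,\mu_2,\mu_3)=(\la_u,\la_s,\la_c)$ in the ordered basis $\mathcal{B}$, and abbreviate $(a_{ij})=\partial_tB^c(x,t_0)$ (a fixed matrix, independent of $n$) and $(a'_{ij}(n))=\partial_tB^c(f^n(x),t_0)$. The displayed identity is then the scalar family
\[
\Big(\tfrac{\mu_i}{\mu_j}\Big)^{n}a_{ij}=a'_{ij}(n),\qquad i,j\in\{1,2,3\},\ n\in\ZZ.
\]
Since $\phi^c$ and $f$ are $\mathcal{C}^\infty$ and $M$ is compact, the map $y\mapsto\partial_tB^c(y,t_0)$ is continuous, so $\{a'_{ij}(n)\}_n$ is bounded (its arguments $f^n(x)$ stay in $M$). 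Now $|\la_s|<1=|\la_c|<|\la_u|$, so $\mu_1,\mu_2,\mu_3$ have pairwise distinct moduli; for each off-diagonal pair $i\neq j$ one has $|\mu_i/\mu_j|\neq 1$, and choosing $n\to+\infty$ when $|\mu_i|>|\mu_j|$ (resp. $n\to-\infty$ when $|\mu_i|<|\mu_j|$) makes $|\mu_i/\mu_j|^{n}\to\infty$. Boundedness of $a'_{ij}(n)$ then forces $a_{ij}=0$. Hence $\partial_tB^c(x,t_0)$ is diagonal, for every $x$ and every $t_0$.

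Finally I would upgrade this to $B^c$ itself. For $i\neq j$ the above gives $\partial_t[B^c(x,t)]_{ij}=[\partial_tB^c(x,t)]_{ij}=0$ for all $t$, so each off-diagonal entry of $B^c(x,t)$ is constant in $t$; evaluating at $t=0$, where $B^c(x,0)=\mathrm{Id}$, shows these entries vanish identically. Thus $B^c(x,t)$ is diagonal in the bases $\mathcal{B}(x),\mathcal{B}(\phi^c_t(x))$, which is exactly $D\phi^c_t(E^\si)=E^\si$ for $\si=u,s,c$. The main obstacle is conceptual rather than computational: once the Anosov-case limit in $t$ is unavailable, one must notice that with $|\la_c|=1$ the relation turns into an exact conjugacy along the orbit, and that a \emph{two-sided} iteration exploiting the now pairwise-distinct moduli (including $|\la_c|=1$) is what kills every off-diagonal term.
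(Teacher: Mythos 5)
Your proof is correct, but it takes a genuinely different route from the paper's. The paper's proof of this lemma does not use the differentiated relation \eqref{comm4} at all: it works directly with \eqref{comm3}, decomposes $D\phi^c_t(e^u(x))=a\,e^u+b\,e^c$ inside the integrable plane $E^u\oplus E^c$, and pushes backward by $f^{-n}$ — the input vector $\la_u^{-n}e^u$ shrinks while the putative center component $b\,e^c$ keeps fixed size (distances along centers are preserved), forcing $b=0$; the case $\sigma=s$ is symmetric. Your argument instead reads the conjugation $A^n\,\partial_tB^c(x,t_0)\,A^{-n}=\partial_tB^c(f^n(x),t_0)$ entrywise and uses two-sided iteration plus boundedness on the compact $M$ to kill every off-diagonal entry at once; this is sound (the extension of \eqref{comm2}--\eqref{comm4} to $n\in\ZZ$ is as routine as you say, and the ratios $|\mu_i/\mu_j|\neq 1$ for $i\neq j$ because $|\la_s|<1=|\la_c|<|\la_u|$). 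What your approach buys is uniformity — all six off-diagonal entries die by the same mechanism, and you never need integrability of $E^{cu}$ or $E^{cs}$; what the paper's buys is brevity and no differentiation in $t$. In fact your detour through $\partial_tB^c$ is avoidable: since $\la_c=1$, \eqref{comm3} already gives $A^n\cdot B^c(x,t)\cdot A^{-n}=B^c(f^n(x),t)$ (an identity the paper invokes immediately after the lemma), and running your entrywise boundedness argument directly on $B^c$ yields diagonality without integrating back from the derivative.
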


\begin{proof}
We consider the case $\sigma=u$ only, as the argument for $\sigma=s$ is completely analogous (while $\sigma=c$ is direct consequence of $E^c$ being tangent to flow lines). Fix $x\in M, y=\phi^c_t(x)$ and take $v=D_y\phi^c_t(e^u(x))$. By integrability of $E^u\oplus E^c$ we can write $v=ae^u(y)+be^c(y)$. Using \eqref{comm3} with $n>0$ and since distances along centers are preserved, we get
\[
D_{f^{-n}(x)}\phi^c_{t}\Big(\frac{1}{\la_u^n}e^u(f^{-n}(x))\Big)=\frac{a}{\la_u^n}e^u(f^{-n}(y))+be^c(f^{-n}(y)).
\]
This gives a contradiction for $n$ large, unless $b=0$. 
\end{proof}

As in the previous part denote by $B^c(x,t)$ the associated matrix to $D_x\phi^c_t$ in the corresponding invariant bases. By \eqref{comm3} $A^n\cdot B^c(x,t)\cdot A^{-n}=B^c(f^n(x),t)$, and since all matrices are diagonal this implies $B^c(x,t)=B^c(f^n(x),t)$ $\forall n.$

\begin{lemma}\label{constant} If $f$ is transitive or real analytic then $B^c(x,t)$ is constant in $x$.
\end{lemma}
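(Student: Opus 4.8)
The plan is to reduce the statement to a property of continuous $f$-invariant functions and then treat the two hypotheses separately. By the previous lemma $D\phi^c_t$ preserves each subbundle, so in the invariant frame $B^c(x,t)$ is diagonal; its center entry is $+1$ because $\phi^c_t$ is the unit-speed flow tangent to $E^c$, whence $D_x\phi^c_t(e^c(x))=e^c(\phi^c_t(x))$. Thus it suffices to prove that the stable and unstable diagonal entries, viewed for fixed $t$ as continuous scalar functions $g^s,g^u:M\to\RR$, are constant. The relation $A^n\cdot B^c(x,t)\cdot A^{-n}=B^c(f^n(x),t)$ displayed just before the lemma, together with diagonality, gives $B^c(x,t)=B^c(f^n(x),t)$ for all $n\in\ZZ$; hence each $g^\sigma$ is $f$-invariant, $g^\sigma\circ f=g^\sigma$. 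The lemma is therefore equivalent to the assertion that such a $g^\sigma$ is constant.

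Under transitivity this is immediate: choosing $x_0$ with dense forward orbit, $g^\sigma$ equals the single value $g^\sigma(x_0)$ on $\{f^n(x_0):n\ge0\}$, and by continuity on its closure, which is $M$. This already yields that $B^c(\cdot,t)$ is constant when $f$ is transitive.

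For the real analytic case I would first record, using only continuity and invariance, that $g^\sigma$ is constant along strong stable and strong unstable leaves: if $y\in\mathcal{F}^s(x)$ then $d(f^n x,f^n y)\to0$ as $n\to+\infty$, so $g^\sigma(x)=g^\sigma(y)$ by $g^\sigma\circ f^n=g^\sigma$ and continuity, and the symmetric argument with $f^{-n}$ handles $\mathcal{F}^u$. Consequently the real analytic function $g^\sigma$ has $dg^\sigma$ annihilating $E^s\oplus E^u$. I would then argue by contradiction: were $g^\sigma$ nonconstant, the maximal level set $N=(g^\sigma)^{-1}(\max g^\sigma)$ would be a nonempty, proper, closed, $f$-invariant real analytic subset which, by the previous sentence, is saturated by $\mathcal{F}^s$ and $\mathcal{F}^u$; at a smooth point its tangent space contains $E^s\oplus E^u$, so $N$ is locally a surface tangent to $E^s\oplus E^u$ carrying the induced Anosov action of $f$, while the center flow $\phi^c_t$ crosses it transversally. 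The heart of the matter, and the step I expect to be the main obstacle, is to exclude such a configuration: one must show that a partially hyperbolic system with $\la_c=1$ and real analytic data admits no proper $f$-invariant, $\mathcal{F}^s$- and $\mathcal{F}^u$-saturated real analytic subset. I would attempt this by pushing $N$ with $\phi^c_t$ and exploiting that $g^\sigma$ is an $f$-invariant real analytic function which is constant on the accessibility classes: in the accessible case these are dense and force $g^\sigma$ to be constant, whereas in the non-accessible case they organize into an analytic foliation by compact leaves transverse to $E^c$, reducing the problem to a continuous function on the one-dimensional transverse space that is invariant under the center isometry, where analyticity is precisely what rules out nonconstant solutions. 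Controlling this transverse (center) direction without the recurrence supplied by transitivity is where the real difficulty lies.
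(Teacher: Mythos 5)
Your reduction to the constancy of the $f$-invariant continuous diagonal entries $g^s,g^u$ is correct, and your transitive case is exactly the paper's argument (dense orbit plus continuity). The gap is in the real analytic case. You reformulate the problem as excluding a proper $f$-invariant, $\mathcal{F}^s$- and $\mathcal{F}^u$-saturated analytic level set and then, by your own admission, cannot close the argument: the accessibility dichotomy you sketch (dense accessibility classes versus a compact foliation transverse to $E^c$) is not established here and is not needed. The ingredient you are missing is \emph{recurrence}: by Birkhoff's recurrence theorem $f$ has a recurrent point $x_0$, i.e.\ $f^{n_k}(x_0)\to x_0$ along some subsequence, and $g^\sigma$ takes the single value $g^\sigma(x_0)$ on this orbit. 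This is the paper's substitute for transitivity, and it is precisely what controls the ``transverse (center) direction'' you identify as the difficulty.

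Your intermediate observation that $g^\sigma$ is constant along $\mathcal{F}^s$ and $\mathcal{F}^u$ leaves is sound and in fact makes the recurrence argument bite: by local product structure $g^\sigma$ is, near $x_0$, an analytic function of the center coordinate alone, so the returns $f^{n_k}(x_0)$ produce a sequence of points accumulating at $x_0$ on which this one-variable analytic function takes the value $g^\sigma(x_0)$; the identity (zeros) theorem for analytic functions then forces $g^\sigma$ to be locally constant at $x_0$, and analyticity plus connectedness of $M$ gives global constancy. So the correct completion is: invariance along orbits $+$ Birkhoff recurrence $+$ the zeros theorem for analytic functions, rather than the level-set/accessibility contradiction you propose.
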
 

\begin{proof} This follows directly by the previous equality (invariance of $B^c(x,t)$ in the orbit of $x$), either by taking a dense orbit (in the transitive case) or a recurrent trajectory (which exists by Birkhoff's recurrence theorem) in the real analytic case, by the zeros theorem for analytic functions.
\end{proof}

We deduce that for $t$ fixed the map $\phi^c_t$ is conservative with constant exponents, hence $B^c(x,t)$ is either
\begin{itemize}
\item[--] the identity, or
\item[--] partially hyperbolic (a center eigenvalue equal to $1$, one larger and other smaller). In this case $\phi^c_t$ is an Anosov flow.
\end{itemize}

The case when $B^c(x,t)=Id$ is the simpler one.

\begin{proposition}\label{identity}
If  $B^c(x,t)=Id$  then it holds:
\begin{itemize}
 \item[--] all the center leaves are closed, and
 \item[--]  $f$ is $\mathcal{C}^{\infty}$ conjugate to a circle extension of a linear Anosov map in $\mathbb{T}^2$.
\end{itemize}
\end{proposition}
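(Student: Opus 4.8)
The plan is to exploit the fact that the hypothesis $B^c(x,t)=\mathrm{Id}$ is extremely rigid. It says that for every $x$ and $t$ the differential $D_x\phi^c_t$ carries the orthonormal frame $\mathcal{B}(x)$ to the orthonormal frame $\mathcal{B}(\phi^c_t(x))$, hence is a \emph{linear isometry} $T_xM\to T_{\phi^c_t(x)}M$. Therefore each $\phi^c_t$ is a Riemannian isometry of $(M,g)$, and $\{\phi^c_t\}_{t\in\RR}$ is a one--parameter subgroup of the compact Lie group $G=\mathrm{Isom}(M,g)$ (Myers--Steenrod). I let $T=\overline{\{\phi^c_t\}}\le G$ be its closure; being the closure of a one--parameter subgroup of a compact group, $T$ is a torus acting smoothly and isometrically on $M$, whose orbits contain the center leaves.

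\textbf{Closedness of the center leaves.} I would prove $\dim T=1$, which forces $\phi^c$ to be periodic. Since $\la_c=1$, equation \eqref{comm2} gives $f\phi^c_t=\phi^c_t f$; passing to limits (convergence in $G$ is uniform and composition with the fixed map $f$ is $\mathcal{C}^0$--continuous) this commutation extends to every $\psi\in T$, so $f$ commutes with the whole torus $T$. Consequently every Killing field $K$ in the Lie algebra of $T$ is $f$--invariant, $Df\circ K=K\circ f$, and by iteration $K(f^n x)=D_xf^n\,K(x)$. Writing $K=k_ue^u+k_se^s+k_ce^c$ and using $A^n=\mathrm{diag}(\la_u^n,\la_s^n,1)$, the orthonormality of $\mathcal{B}$ gives $|K(f^nx)|^2=\la_u^{2n}k_u^2+\la_s^{2n}k_s^2+k_c^2$, which must remain bounded as $n\to\pm\infty$; since $\la_s<1<\la_u$ this forces $k_u=k_s=0$, i.e. $K$ is everywhere tangent to the one--dimensional bundle $E^c$. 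Hence $\dim T\le 1$, so $T\cong\mathbb{S}^1$ and $\phi^c$ is periodic: every center leaf is a closed circle.

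\textbf{The circle extension structure.} Now $\phi^c$ is an isometric circle action whose orbits are exactly the center leaves. Because $\la_c=1$, $f$ preserves arc length along centers, so the period (length) $\tau(x)$ of the leaf through $x$ is an $f$--invariant continuous function; in the transitive case it is therefore constant, which means all orbits attain the maximal period and the action is \emph{free}. Thus $\pi:M\to B:=M/\phi^c$ is a smooth principal $\mathbb{S}^1$--bundle over a closed surface $B$. As $f$ commutes with $\phi^c$, it descends to a diffeomorphism $\bar f:B\to B$, and the $\phi^c$-- and $f$--invariant bundles $E^s,E^u$ (their $\phi^c$--invariance was established in the lemma preceding Lemma~\ref{constant}) project to a $D\bar f$--invariant splitting $TB=\bar E^s\oplus\bar E^u$ with the same constant exponents $\la_s<1<\la_u$; hence $\bar f$ is Anosov. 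Since the only closed surface admitting an Anosov diffeomorphism is the torus, $B\cong\mathbb{T}^2$. Moreover $\bar E^{s},\bar E^{u}$ are smooth (being projections of the smooth foliations $\mathcal{F}^{cs},\mathcal{F}^{cu}$) and $\bar f$ has constant exponents, so the argument of the previous subsection, applied verbatim in dimension two, provides a $\mathcal{C}^\infty$ conjugacy between $\bar f$ and a linear Anosov $L_0$ of $\mathbb{T}^2$. Lifting this conjugacy to a fiber--preserving $\mathcal{C}^\infty$ diffeomorphism of the bundle exhibits $f$ as $\mathcal{C}^\infty$ conjugate to a circle extension of $L_0$.

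\textbf{Main obstacle.} The crux is the periodicity step: the leverage comes entirely from recognizing $\phi^c$ as an isometric flow and then combining compactness of $\mathrm{Isom}(M,g)$ with the hyperbolic growth of $Df$ to annihilate every transverse Killing direction. A second delicate point is the \emph{freeness} of the circle action (equivalently, the absence of exceptional fibers), which I extract from transitivity via constancy of $\tau$; in the real--analytic, non--transitive situation one would instead pass to a finite cover and iterate, as permitted by the main theorem, to remove exceptional orbits before forming the quotient. Finally, the upgrade from the topological (Franks--Manning) conjugacy to a $\mathcal{C}^\infty$ one on the base is not automatic for general Anosov maps and genuinely uses the constant--exponent and smooth--foliation hypotheses inherited by $\bar f$, exactly as in the $\la_c>1$ case treated above.
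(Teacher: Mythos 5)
Your argument reaches the key conclusion---that every center leaf is closed---by a genuinely different route than the paper. The paper first produces a single closed leaf by a recurrence argument (Lemma \ref{identititylem1}: if an $f^k$-invariant center leaf were a line, $f^k$ restricted to it would be the identity or a translation, and both options are excluded, the latter by comparing unstable distances between center arcs $I_0,I_1,I_2$), and then propagates closedness via the first-return map to a transversal: since $B^c\equiv \mathrm{Id}$ the linear Poincar\'{e} flow is trivial, so the return map is the identity near a closed leaf, the set of closed leaves is open, and density of recurrent points (conservativity) finishes the job (Lemma \ref{identititylem2}). You instead observe that $B^c(x,t)=\mathrm{Id}$ makes each $\phi^c_t$ an isometry of the adapted metric, take the closure torus $T\subset\mathrm{Isom}(M,g)$, and annihilate every transverse Killing direction using the growth of $A^n$; this yields periodicity of the flow in one stroke and, as a bonus, never needs the separate closed-leaf existence lemma. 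The second half (quotient to a surface, projected Anosov with constant exponents, hence $\mathbb{T}^2$ and a smooth conjugacy lifted fiberwise) is essentially identical to the paper's.

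Two points need repair, both within your own framework. First, the period function $\tau$ of a circle action is only lower semicontinuous, not continuous (it drops at exceptional orbits), so ``invariant, continuous, hence constant by transitivity'' does not by itself exclude exceptional fibers; constancy on a dense orbit only gives $\tau\le c$ everywhere. But your isometric setup closes this immediately: if some $y$ had isotropy $\mathbb{Z}/k$ with $k\ge 2$, the isometry $\phi^c_{c/k}$ would fix $y$ with derivative the identity (again because $B^c=\mathrm{Id}$), hence would be the global identity, contradicting minimality of the period $c$; so the action is automatically free and no transitivity is needed here. Second, ``every Killing field in $\mathrm{Lie}(T)$ is tangent to the line field $E^c$'' tells you that all $T$-orbits are at most one-dimensional; to conclude $\dim T\le 1$ you should add that an effective action of a torus has a principal orbit of dimension equal to $\dim T$. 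Both fixes are short, so I regard this as a valid alternative proof rather than a flawed one.
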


We will prove this through a series of lemmas.

\begin{lemma}\label{identititylem1}
If  $B^c(x,t)=Id$  then there is a closed center leaf (i.e.\@ a circle tangent to $E^c$).	
\end{lemma}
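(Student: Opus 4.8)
The plan is to use the hypothesis in its strongest form. Since the frame $\mathcal{B}(x)=\{e^u(x),e^s(x),e^c(x)\}$ is orthonormal and $B^c(x,t)=Id$ is the matrix of $D_x\phi^c_t$ in the bases $\mathcal{B}(x),\mathcal{B}(\phi^c_t(x))$, the differential $D_x\phi^c_t$ is a linear isometry $T_xM\to T_{\phi^c_t(x)}M$ for every $x$ and $t$; that is, $\{\phi^c_t\}_{t\in\RR}$ is a one-parameter group of isometries of $(M,g)$. By Myers--Steenrod $\mathrm{Isom}(M,g)$ is a compact Lie group, so I would set $G:=\overline{\{\phi^c_t:t\in\RR\}}$, a compact, connected, abelian subgroup, hence a torus $G\cong\TT^m$ with $m\geq 1$ (it is nontrivial since $e^c$ never vanishes). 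Because $\la_c=1$ one has $f\circ\phi^c_t=\phi^c_t\circ f$, so $f$ commutes with every element of $G$, and consequently $D_xf^n(\xi_M(x))=\xi_M(f^n x)$ for every Killing field $\xi_M$ generated by $\xi\in\mathfrak{g}:=\mathrm{Lie}(G)$.

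The heart of the argument is to show that \emph{every $G$-orbit is one dimensional}. Recall $T_x(G\cdot x)=\{\xi_M(x):\xi\in\mathfrak{g}\}$ and that $e^c(x)$ lies in it (it is the fundamental field of the generator of $\phi^c$). No orbit is a point, since the center flow has no fixed points. Suppose some orbit had dimension $\geq 2$. Then $T_x(G\cdot x)\supsetneq\RR e^c(x)$, so there is $\xi\in\mathfrak{g}$ with $w:=\xi_M(x)=w_u e^u(x)+w_s e^s(x)+w_c e^c(x)$ and $(w_u,w_s)\neq(0,0)$. Using $\xi_M(f^n x)=D_xf^n(w)$ together with $D_xf^n=A^n=\mathrm{diag}(\la_u^n,\la_s^n,1)$ in the invariant orthonormal frames, one gets $\|\xi_M(f^n x)\|\geq \la_u^{n}|w_u|$ when $w_u\neq 0$, and $\|\xi_M(f^{-n}x)\|\geq \la_s^{-n}|w_s|$ when $w_s\neq 0$; either way the norm tends to $\infty$. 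This contradicts the boundedness of the continuous field $\xi_M$ on the compact manifold $M$. Hence every $G$-orbit is a circle.

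To conclude, fix any $x$; then $G\cdot x\cong G/G_x\cong\mathbb{S}^1$, and the map $t\mapsto\phi^c_t(x)$ is the image of the one-parameter subgroup $\{\phi^c_t\}$ under the projection $G\to G/G_x\cong\mathbb{S}^1$. This is a continuous homomorphism $\RR\to\mathbb{S}^1$ which is nonconstant (again because there are no fixed points of $\phi^c$), hence surjective and periodic. Therefore the center leaf $\mathcal{F}^c(x)=\{\phi^c_t(x):t\in\RR\}$ equals the circle $G\cdot x$ and is closed. In particular a closed center leaf exists; in fact this argument shows that \emph{all} center leaves are closed, which also anticipates the first item of Proposition \ref{identity}.

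The soft parts (compactness of $\mathrm{Isom}(M,g)$, the torus structure of $G$, and the identification $T_x(G\cdot x)=\{\xi_M(x)\}$) are standard and I would invoke them directly. The one genuinely substantive step, and the main obstacle, is ruling out orbits of dimension $\geq 2$: the point is that the commutation $f_*\xi=\xi$ forces any orbit direction transverse to $E^c$ to be stretched (or contracted) at the \emph{constant} rate $\la_u$ (resp. $\la_s$), which is incompatible with $\xi_M$ being a globally bounded smooth field. It is precisely the combination of the isometric character of $\phi^c_t$ with the constancy of the exponents that drives this contradiction.
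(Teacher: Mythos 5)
Your proof is correct, and it takes a genuinely different route from the paper's. The paper argues by contradiction on a single leaf: it picks a recurrent point to get an $f^k$-invariant center leaf, assumes that leaf is a line so that $f^k$ restricted to it is the identity or a translation, and in each case plays the uniform unstable expansion of $f^k$ against the fact that $\phi^c_t$ (having $D\phi^c_t=Id$) moves nearby arcs without changing their unstable separation. You instead exploit the hypothesis globally: $B^c(x,t)=Id$ in the orthonormal frames makes $\phi^c_t$ a flow of isometries, so by Myers--Steenrod its closure $G$ in the compact group $\mathrm{Isom}(M,g)$ is a torus; the commutation $f\circ\phi^c_t=\phi^c_t\circ f$ passes to $G$ and gives $f_*\xi_M=\xi_M$ for every fundamental field, and the constant rates $\la_u,\la_s$ then force every $\xi_M$ to be tangent to $E^c$ (otherwise $\|\xi_M(f^{\pm n}x)\|\to\infty$, contradicting boundedness on a compact manifold), so every $G$-orbit is a circle filled by the flow orbit. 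All the steps check out: $G$ is connected and abelian as the closure of a one-parameter subgroup, the stabilizer argument correctly turns the nonconstant homomorphism $\RR\to G/G_x\cong\mathbb{S}^1$ into periodicity of the leaf, and the dynamical contradiction is exactly the right use of constant exponents. What your approach buys is strictly more: it shows at once that \emph{every} center leaf is closed, subsuming Lemma \ref{identititylem2} and the first item of Proposition \ref{identity}, at the cost of invoking Myers--Steenrod and the structure theory of compact group actions; the paper's argument is more elementary and stays entirely within hyperbolic-dynamics techniques, but only produces one closed leaf and needs a separate first-return-map argument to propagate closedness to all leaves. (Two cosmetic points: write $|\la_u|^n$, $|\la_s|^{-n}$ in the growth estimates since the eigenvalues may be negative, and note that the smooth metric you use is the one declaring $\mathcal{B}(x)$ orthonormal, which is the paper's standing convention.)
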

	
\begin{proof}
Taking a recurrent point one can find $p$ such that ${\cal F}^c_p$ is invariant by $f^k$ for some $k$. We claim that ${\cal F}^c_p$ is closed. Assuming otherwise, ${\cal F}^c_p$ is homeomorphic to the real line and so $f^k:{\cal F}^c_p\to {\cal F}^c_p$ is either the identity or a translation.  Observe that for a partially hyperbolic diffeomorphism, two periodic points of the same period that are sufficiently close have to belong to the same local center manifold. But, if ${\cal F}^c_p$ is not closed and $f^k|{{\cal F}^c_p}$ is the identity, there are periodic points of $f$ with  the same period, arbitrary close one to each other that does not share the same local center leaf. In case that $f^k|{{\cal F}^c_p}$ is a translation, i.e. $f^k(x)=x+\alpha$ along the center leaf; one can take a point $z$ and $t$ arbitrary large such that $z, \phi_{t}(z), \phi_{2.t}(z)$ are  close to each other and arcs $I_0, I_1, I_2$ with length $4.\alpha$ inside ${\cal F}^c_p$ and containing in the middle the points  $z, \phi_{t}^c(z), \phi_{2.t}^c(z)$ respectively; since $t$ is large, the three arcs are disjoints. Let $n$ be the smallest positive integer  such that $f^{k.n}(z)\in I_1$, which exists because $f^k$ restricted to the center is a translation by $\alpha$ and the arcs has length $4.\alpha$. From the commutative property, also holds that $f^{2.k.n}(z)\in I_2$; in particular, $f^{k.n}(I_0)\cap I_1\neq\emptyset$ and  $f^{k.n}(I_1)\cap I_2\neq\emptyset$. Since $f^k$ is partially hyperbolic, the unstable distance of $I_2$ to $I_1$ is $\lambda^u$ times the distance from $I_1$ to $I_0$. On the other hand, since $\phi^c_t(I_0)=I_1$ and $\phi^c_t(I_1)=I_2$ and $D\phi^c_t$ is the identity, it holds that the unstable distance of $I_2$ to $I_1$ is equal to the distance from $I_1$ to $I_0$. A contradiction.
\end{proof}

\begin{lemma}\label{identititylem2}
	If  $B(x,t)=Id$ then all center leaves are closed.	
\end{lemma}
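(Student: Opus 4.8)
The plan is to upgrade the single closed center leaf produced in Lemma \ref{identititylem1} to a global statement by showing that the fixed-point set of a suitable time map of the center flow is simultaneously open, closed and nonempty, and then invoking connectedness of $M$.

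First I would record the two structural consequences of the hypothesis $B^c(x,t)=\mathrm{Id}$. Since the unstable diagonal entry of $D_x\phi^c_t$ equals $+1$ in the invariant frame, the flow $\phi^c_t$ maps unstable leaves to unstable leaves and acts along each $\mathcal F^u$-leaf as an orientation-preserving isometry (in the arc-length parametrization); the same holds for $\mathcal F^s$, and trivially for $\mathcal F^c$ since it is tangent to the flow lines. Now let $p$ be a point on a closed center leaf, say $\phi^c_\ell(p)=p$ with $\ell>0$, furnished by Lemma \ref{identititylem1}. Because $\phi^c_\ell$ preserves $\mathcal F^u$ and fixes $p$, it maps $W^u(p)$ onto $W^u(\phi^c_\ell(p))=W^u(p)$; being an orientation-preserving isometry of a line with a fixed point, it restricts to the identity on the whole leaf $W^u(p)$, and likewise $\phi^c_\ell|_{W^s(p)}=\mathrm{id}$.

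Consequently the set $S:=\{x:\phi^c_\ell(x)=x\}$ contains $W^u(p)\cup W^s(p)$, and applying the very same reasoning at an arbitrary point of $S$ shows that $S$ is saturated by $\mathcal F^u$ and by $\mathcal F^s$; it is also saturated by $\mathcal F^c$, since $\phi^c_\ell$ commutes with $\phi^c_t$. I would then prove that such a nonempty set, saturated by all three foliations, is open. This uses dynamical coherence: by the stable manifold theorem applied to $f^{-1}$ (exactly as $\mathcal F^{cu}$ was produced for $f$ earlier in this section), the bundle $E^s\oplus E^c$ integrates to a center-stable foliation $\mathcal F^{cs}$. Given $y\in S$, saturation by $\mathcal F^s$ and $\mathcal F^c$ forces $S$ to contain the local leaf $W^{cs}_{loc}(y)$, and then saturation by $\mathcal F^u$ forces $S$ to contain $\bigcup_{w\in W^{cs}_{loc}(y)}W^u_{loc}(w)$, which is a neighborhood of $y$ because $TM=E^{cs}\oplus E^u$ and the leaves are transverse. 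Since $S$ is closed (it is a fixed-point set of a homeomorphism) and nonempty, connectedness of $M$ gives $S=M$; hence every center leaf is closed, with period dividing $\ell$.

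The step I expect to be the main obstacle is the openness argument: one must verify that the local product structure genuinely fills a three-dimensional neighborhood of $y$, which is precisely where the existence of $\mathcal F^{cs}$ (dynamical coherence) is indispensable, and one has to keep all the assembled local leaves inside a single product chart so that no global identification of leaves spoils the local picture. Everything else reduces to the elementary fact that an orientation-preserving isometry of $\mathbb R$ with a fixed point is the identity, together with the foliation invariance already forced by $B^c(x,t)=\mathrm{Id}$.
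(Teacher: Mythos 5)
Your proof is correct, but it globalizes the single closed leaf by a genuinely different mechanism than the paper. The paper takes a transversal section $\Sigma'\subset\Sigma$ through the periodic orbit and studies the first return map $R$: since $B^c(x,t)=\mathrm{Id}$ the linear Poincar\'e flow, hence $D_yR$, is the identity for every $y\in\Sigma'$, so $R=\mathrm{id}$ on $\Sigma'$ once it has a fixed point; this makes the set of points with closed center leaf open, and the paper then finishes with a uniform lower bound on the diameter of these open sets (using $\lambda_c=1$ and the leaves $W^{cs},W^{cu}$) combined with density of the recurrent points of the conservative map $f$. You instead consider the fixed-point set $S$ of $\phi^c_\ell$ and exploit that $\phi^c_\ell$ acts as an orientation-preserving isometry of each stable and each unstable leaf, so a single fixed point on such a leaf forces the entire leaf into $S$; saturation of $S$ by the three invariant foliations plus the local product structure makes $S$ open, and since a fixed-point set is automatically closed, connectedness of $M$ gives $S=M$. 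Your route dispenses with both the recurrence argument and the uniform-diameter estimate, and it yields the stronger conclusion $\phi^c_\ell=\mathrm{id}$ on all of $M$ (every leaf has period dividing $\ell$), which feeds directly into the fibration argument in the proof of Proposition \ref{identity}. Both arguments ultimately rest on the same inputs: the closed leaf from Lemma \ref{identititylem1}, the identity of $D\phi^c_t$ in the invariant frame, and the existence of the center-stable and center-unstable foliations, which the paper also invokes without further comment; you correctly single out the openness step, where $\mathcal F^{cs}$ is used, as the only point requiring care.
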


\begin{proof}	
By the previous Lemma there exists a closed center leaf, thus there is a periodic point $x$ of $\phi^c_t.$ Let us consider two local transversal sections $\Sigma'\subset \Sigma$ to the flow containing $x$ and let $R$ be the first return map from $\Sigma'$ to $\Sigma.$ The transversal section can be taken in such a way that $T_y\Sigma=N_y$ where $N_y$ is the orthogonal plane to the flow direction at $y$. In that case, $D_yR$, the derivative of $R$ at a point $y\in \Sigma$, coincides with $\hat \phi_{r(y)}(y)$, the Linear Poincar\'{e} flow at $y$ with $r(y)$ being the return time of $y$ to $\Sigma$ by the flow $\phi^c_t$. Therefore, for any $y\in \Sigma'$, the derivative of the return map is the identity and since $R$ has a fixed point, then $R$ is the identity in $\Sigma'$. In particular, this implies that any center leaf intersecting $\Sigma'$ is a closed leaf with trivial holonomy. This way we prove that the set of points having a closed center leaf is an open set. Since the center eigenvalue of $f$ is one, we deduce that for a point $p$ having a compact center leaf all other leaves inside $W^{cs}(p),W^{cu}(p)$ are circles with uniformly bounded length, and this implies that for a given closed center leaf there exists an open set of bounded by below diameter where all other center leaves are closed. Since the recurrent points of $f$ are dense (because $f$ is conservative), we deduce that every center leaf is closed.
\end{proof}

\begin{proof}[Proof of Proposition \ref{identity}]

By the Lemma above  ${\cal F}^c$ is a $C^\infty$ foliation by compact leaves without holonomy and so $M/{\cal F}^c$ is a smooth compact surface and $M\to M/{\cal F}^c$ is a smooth fibration.  By standard arguments it follows that $M$ is a Nilmanifold (see for example Theorem 3 in \cite{maxmeas}).
The map $f$ induces an hyperbolic diffeomorphism $\hat f:M/{\cal F}^c\to M/{\cal F}^c$ that has constant exponents in the base obtained by projecting $\{\mathcal{B}(x)\}$. By the same arguments used in the case $\la_c>1$ we deduce that $M/{\cal F}^c$ is the two dimensional torus and $\hat f$ is $\mathcal{C}^{\infty}$ conjugate to a linear Anosov $L$. By extending the aforementioned conjugacy to $M$ as the identity in the fibers, we conclude that $f$ is $\mathcal{C}^{\infty}$ conjugate to an extension of $L$.

\end{proof}

\begin{question}
In the skew-product case, $M=\mathbb{T}^3$ and $f$ is conjugate to a map of the form $L\rtimes g_x$,  $L(x,\theta)=(L\cdot x,\theta+\alpha(x))$. It was asked by the referee which type of properties can be deduced from $\alpha$ if we assume further that the invariant bundles are smooth, so we leave the problem for the interested reader.  
\end{question}

\subsection{\texorpdfstring{$\lambda_c=1$}: Anosov flow case}

It remains for us to analyze the case where $D_x\phi^c_t$ is partially hyperbolic.

\begin{lemma}\label{differential}
If $D_x\phi^c_t$ is partially hyperbolic then $\phi^c_t$ is either the suspension of a $\mathcal{C}^{\infty}$ Anosov map in $\mathbb{T}^2$ or, modulo finite covering and $\mathcal{C}^{\infty}$ conjugacy, the geodesic flow acting on a surface of constant negative sectional curvature.
\end{lemma}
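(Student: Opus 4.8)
The plan is to recognize $\phi^c_t$ as an Anosov flow carrying $\mathcal{C}^{\infty}$ weak invariant foliations, and then to quote the classification of such flows due to Ghys \cite{ghyss}.

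First I would record the infinitesimal picture. By Lemma \ref{constant} the matrix $B^c(x,t)=B^c(t)$ is independent of $x$ and diagonal; its central entry, along the flow direction $E^c$, equals $1$, one entry contracts $E^s$ and one expands $E^u$, both at fixed exponential rates. Hence $\phi^c_t$ is an Anosov flow whose flow direction is $E^c$, whose strong stable and unstable bundles are $E^s$ and $E^u$, and whose exponents are constant; its weak stable and weak unstable bundles are therefore $E^{cs}=E^c\oplus E^s$ and $E^{cu}=E^c\oplus E^u$.

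The decisive preliminary step is to verify that these two weak foliations are genuinely $\mathcal{C}^{\infty}$, that is, transversely and not merely leafwise smooth. Because $E^s,E^c,E^u$ are smooth bundles by the standing hypothesis, the distributions $E^{cs}$ and $E^{cu}$ are smooth; they are also integrable, their integral surfaces being the weak stable and weak unstable manifolds of $\phi^c_t$ (equivalently the center-stable and center-unstable leaves of $f$ furnished by the stable manifold theorem). A smooth integrable distribution is involutive, so by the Frobenius theorem $\mathcal{F}^{cs}$ and $\mathcal{F}^{cu}$ are $\mathcal{C}^{\infty}$ foliations. I expect this to be the step requiring the most care: one must ensure that smoothness of the bundle together with the existence of integral leaves upgrades to transverse smoothness of the foliation, which is exactly what Frobenius delivers once involutivity is in hand.

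With both weak foliations of class $\mathcal{C}^{\infty}$, I would apply the rigidity theorem of Ghys \cite{ghyss} for Anosov flows on closed orientable $3$-manifolds: up to a finite cover and a $\mathcal{C}^{\infty}$ orbit equivalence, such a flow is either the suspension of a hyperbolic automorphism of $\mathbb{T}^2$ or the geodesic flow of a surface of constant negative curvature. It then remains to fix the time parametrization using constant exponents. In the suspension case a non-constant roof function would force the expansion rate along $E^u$ to depend on the orbit, contradicting that $B^c(t)$ is constant in $x$; hence the roof is smoothly cohomologous to a constant and, after a smooth conjugacy, $\phi^c_t$ is a constant-time suspension of a $\mathcal{C}^{\infty}$ Anosov map of $\mathbb{T}^2$. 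In the geodesic case the smoothness of the weak foliations already forces constant curvature, and one lands on the constant negative curvature geodesic flow modulo finite covers and $\mathcal{C}^{\infty}$ (leaf) conjugacy. This establishes the stated dichotomy.
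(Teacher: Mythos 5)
Your proposal is correct and follows essentially the same route as the paper: observe that $\phi^c_t$ is an Anosov flow whose (weak) stable and unstable distributions are $\mathcal{C}^{\infty}$, and invoke Ghys's classification to conclude it is either a suspension of a smooth Anosov surface map or, modulo finite covering and smooth conjugacy, the geodesic flow of a constant negative curvature surface. The extra care you take with Frobenius for the weak foliations and with normalizing the roof function is sound but not needed beyond what the paper's two-line proof already asserts.
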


\begin{proof}
We already saw that $\phi^c_t$ is an Anosov flow with $\mathcal{C}^{\infty}$ stable and unstable distributions. Either $\phi^c_t$ is a suspension (necessarily of $\mathcal{C}^{\infty}$ Anosov surface map), or by \cite{ghyss} there exists a smooth diffeomorphism sending the orbits of $\phi^c_t$ to the orbits of the diagonal flow on a homogeneous space $\Gamma\backslash  \widetilde{SL}(2,\RR)$. 
\end{proof}

\begin{proposition}\label{flow}
If  $D_x\phi^c_t$ is partially hyperbolic then there exists an iterate $f^k$ that is the time $T$-map of an Anosov flow.
\end{proposition}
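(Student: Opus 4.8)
The plan is to exploit that, since $\lambda_c=1$, relation \eqref{comm2} with $\sigma=c$ reads $f\circ\phi^c_t=\phi^c_t\circ f$, so that $f$ lies in the centralizer of the Anosov flow $\phi^c_t$. I would first match derivatives. By Lemma \ref{constant} the matrix of $D\phi^c_t$ in the invariant frame $\mathcal B$ is the constant $B^c(t)$, and since $t\mapsto B^c(t)$ obeys the cocycle (group) law it equals $\mathrm{diag}(e^{\chi t},e^{-\chi t},1)$ for some $\chi>0$; the matrix of $Df$ is the constant $A=\mathrm{diag}(\lambda_u,\lambda_s,1)$, with $\lambda_u=\lambda_s^{-1}>1$ (recall $\det A=\pm1$, $\lambda_c=1$, replacing $f$ by $f^2$ if needed). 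Choosing $T$ with $e^{\chi T}=\lambda_u$ gives $B^c(T)=A$. Hence $g:=\phi^c_{-T}\circ f$ has, in the frame $\mathcal B$, derivative $B^c(-T)\cdot A=A^{-1}A=\mathrm{Id}$; that is, $D_xg\,(e_\sigma(x))=e_\sigma(gx)$ for $\sigma=s,c,u$ and every $x$. Thus $g$ preserves the smooth absolute parallelism $\{e^s,e^c,e^u\}$, and since $f$ commutes with $\phi^c_t$ so does $g$.

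The heart of the argument is then to show that $g$ has finite order. Because $g$ carries the frame to the frame, it maps each $\phi^\sigma$-orbit isometrically and orientation-preservingly onto another, so $g$ commutes with all three flows; more importantly, an automorphism of a smooth framing is rigid, being determined by its $1$-jet at a single point. Consequently the group $\mathcal G=\mathrm{Aut}(M,\{e^s,e^c,e^u\})$ is a Lie group acting freely on $M$ (a fixed point would force $Dg=\mathrm{Id}$ there, whence $g=\mathrm{id}$), and as $M$ is compact this action is proper and $\mathcal G$ is compact. It remains to see that the closed subgroup $\mathcal G\cap Z(\phi^c)$ containing $g$ has no nontrivial connected component. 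Here I invoke the concrete description of $\phi^c_t$ from Lemma \ref{differential}: in the geodesic case $M=\Gamma\backslash\widetilde{SL}(2,\RR)$ and $\{e^s,e^c,e^u\}$ is the left-invariant $\mathfrak{sl}(2,\RR)$-frame, so a framing automorphism is a left translation normalising $\Gamma$, i.e.\ an element of $N(\Gamma)/\Gamma$, which is finite; in the suspension case a framing automorphism commuting with the flow descends to a translation of $\mathbb T^2$ commuting with the hyperbolic monodromy $L_0$, hence to an element of the finite group $\ker(L_0-\mathrm{Id})$. Either way $\mathcal G\cap Z(\phi^c)$ is finite.

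With this in hand the conclusion is immediate: setting $k=|\mathcal G\cap Z(\phi^c)|$ we get $g^k=\mathrm{id}$, and since $g$ and $\phi^c_t$ commute, $f^k=(\phi^c_T\circ g)^k=\phi^c_{kT}$ is the time-$kT$ map of the Anosov flow $\phi^c_t$, as claimed. I expect the genuinely delicate point to be the finiteness of $\mathcal G\cap Z(\phi^c)$ — equivalently, ruling out a continuous family of symmetries commuting with $\phi^c_t$. The cleanest route is the one above, reducing to the explicit models furnished by Lemma \ref{differential}; a model-free alternative would be to argue that a nontrivial one-parameter subgroup of $\mathcal G$ commuting with $\phi^c_t$ yields a second independent symmetry contradicting the rigidity (expansivity together with transitivity) of $\phi^c_t$, but that requires a separate centralizer-rigidity argument which the model computation renders unnecessary.
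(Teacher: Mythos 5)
Your reduction is genuinely different from the paper's and is attractive: with $T$ chosen so that $B^c(T)=A$, the map $g=\phi^c_{-T}\circ f$ does preserve the orthonormal parallelism $\{e^s,e^c,e^u\}$ and commutes with $\phi^c_t$, and Kobayashi's theorem does make $\mathcal G=\mathrm{Aut}(M,\{e^s,e^c,e^u\})$ a compact Lie group acting freely. But the decisive step --- finiteness of $\mathcal G\cap Z(\phi^c)$, equivalently that $g$ has finite order --- is exactly where your write-up fails. You transport the question to the algebraic models of Lemma \ref{differential} and compute there with the left-invariant $\mathfrak{sl}(2,\RR)$-frame (resp.\ the standard frame of a constant-roof linear suspension). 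However, Lemma \ref{differential} only provides, in the geodesic-flow case, a diffeomorphism sending \emph{orbits to orbits} of the model (an orbit equivalence, not a flow conjugacy), and in neither case does it identify the frame $\{e^s,e^c,e^u\}$ --- which comes from the partially hyperbolic splitting of $f$ and the chosen metric --- with the standard frame of the model. Without such an identification, computing $N(\Gamma)/\Gamma$ or $\ker(L_0-\mathrm{Id})$ says nothing about $\mathcal G\cap Z(\phi^c)$. The point cannot be waved away: if $\mathcal G\cap Z(\phi^c)$ contained a circle, $g$ could sit in it as an element of infinite order and no iterate of $f$ would be a time map of $\phi^c_t$. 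A correct, model-free argument is available and not long --- a one-parameter subgroup $\{\psi_s\}\subset\mathcal G\cap Z(\phi^c)$ has a nowhere-vanishing generator $X$; writing $X=ae^u+be^s+ce^c$, the relation $D\phi^c_t(X)=X\circ\phi^c_t$ gives $a\circ\phi^c_t=e^{\chi t}a$, so boundedness plus recurrence forces $a\equiv b\equiv 0$; then $\psi_s=\phi^c_{\tau(s,\cdot)}$ with $\tau(s,q)>0$ for $s>0$, which cannot preserve the frame since $D\phi^c_\tau$ dilates $e^u$ by $e^{\chi\tau}\neq 1$ --- but this argument is precisely what you declined to supply, so as written the proof has a genuine gap at its critical point.

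For comparison, the paper avoids the centralizer question entirely: it fixes a shortest closed orbit $O(p)$ of $\phi^c_t$ (hence invariant under some $f^k$), introduces linearizing coordinates on $W^s_{loc}(O(p),\phi^c_t)$ and $W^u_{loc}(O(p),\phi^c_t)$, and shows by a bounded-return-time estimate that the homoclinic orbits of $O(p)$ joining $f(\gamma_u)$ to $\gamma_s$ are fixed by $f^k$. Having two fixed orbits in the linearizing coordinates pins down $T$ with $Df^k=D\phi^c_T$ and forces $f^k=\phi^c_T$ on $W^s(O(p),\phi^c_t)\cup W^u(O(p),\phi^c_t)$, which is dense. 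Your route, once the finiteness step is actually proved, buys a cleaner structural statement (the obstruction to being a time map is a finite group of frame symmetries) and bypasses the homoclinic analysis; but the burden you deferred is of comparable substance to the argument the paper carries out.
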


We first note the following.

\begin{lemma}
If  $D_x\phi^c_t$ is partially hyperbolic then there is an iterate $f^k$ and a closed center leaf $O(p)$ such that modulo a $\mathcal{C}^{\infty}$ reparametrization of $\phi_t^c$, it holds:
\begin{itemize}
\item $O(p)$ has length one.
\item If $W^s_{loc}(O(p), \phi^c_t),  W^u_{loc}(O(p), \phi^c_t)$ are the local stable and unstable manifolds of $O(p)$ with respect to $\phi^c_t$ then 
   \begin{itemize}
   \item $f^k(W^s_{loc}(O(p), \phi^c_t))\subset W^s_{loc}(O(p), \phi^c_t)$, and
   \item $f^{-k}(W^u_{loc}(O(p), \phi^c_t))\subset W^u_{loc}(O(p), \phi^c_t)$.
   \end{itemize}
\end{itemize}	  
\end{lemma}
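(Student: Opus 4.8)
The plan is to use that, in the present case, the center flow $\phi^c_t$ is a conservative Anosov flow whose weak invariant splitting is exactly $E^s\oplus E^c\oplus E^u$ (recall we already showed $D\phi^c_t(E^\sigma)=E^\sigma$, $\sigma=s,c,u$), and that $f$ is itself a single time map of this flow, $f=\phi^c_{T_0}$ for some $T_0\in\RR$ (see the proof outline at the beginning of this section). After reversing the flow direction if necessary I may assume $T_0>0$, so that forward time contracts $E^s$ and expands $E^u$; this is consistent with $\lambda_s<1<\lambda_u$ because $A=B^c(\cdot,T_0)$. Being an Anosov flow on a closed $3$-manifold, $\phi^c_t$ has periodic orbits, so I fix one such closed center leaf $O(p)$, of period $\tau>0$.

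First I would normalize the length of $O(p)$ by a \emph{constant} time rescaling: replacing $\phi^c_t$ by $\psi_t:=\phi^c_{\tau t}$ gives a $\mathcal{C}^\infty$ reparametrization for which $O(p)$ has length (period) one, while the orbits, and hence the stable and unstable manifolds, are unchanged and $f$ remains a single time map, namely $f=\psi_{T}$ with $T=T_0/\tau$. My insistence on a constant rescaling is exactly what preserves the relation ``$f$ is one time map of the flow'', which is what makes the invariance below transparent; a general (non-constant) reparametrization would destroy it. From here on I rename $\psi$ back to $\phi^c_t$.

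Next I would establish the invariance of the local weak stable and unstable manifolds. Since $O(p)$ is a flow orbit it is flow invariant, whence $f(O(p))=\phi^c_T(O(p))=O(p)$, and likewise the weak stable set $W^s(O(p),\phi^c_t)$ is flow invariant and therefore $f$-invariant. For the \emph{local} statement I would use the strong-stable foliated structure: realize $W^s_{loc}(O(p),\phi^c_t)$ as the union over $z\in O(p)$ of the strong stable segments $\{q\in W^{ss}(z):d^{ss}(q,z)<\epsilon\}$. If $q\in W^{ss}(z)$ with $d^{ss}(q,z)<\epsilon$ and $s\ge0$, then $\phi^c_s(q)\in W^{ss}(\phi^c_s(z))$, $\phi^c_s(z)\in O(p)$, and $d^{ss}(\phi^c_s(q),\phi^c_s(z))=\mu^s(s)\,d^{ss}(q,z)<\epsilon$, where $\mu^s(s)\le 1$ (for $s\ge0$) is the contraction of $B^c(\cdot,s)$ along $E^s$, constant in the base point by Lemma \ref{constant}. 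Hence $\phi^c_s(W^s_{loc})\subset W^s_{loc}$ for all $s\ge0$, and taking $s=kT>0$ gives $f^k(W^s_{loc}(O(p),\phi^c_t))\subset W^s_{loc}(O(p),\phi^c_t)$ for every $k\ge1$. The same argument applied to the reversed flow (equivalently to $f^{-k}=\phi^c_{-kT}$) yields $f^{-k}(W^u_{loc}(O(p),\phi^c_t))\subset W^u_{loc}(O(p),\phi^c_t)$.

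The main obstacle I anticipate is compatibility and locality rather than depth. I must make sure the normalizations do not interfere: the rescaling to length one must not spoil the identity $f=\phi^c_T$ (handled by using a constant rescaling), and the inclusions must hold for the local manifolds and not merely the global ones. The latter is precisely why I define $W^s_{loc}$ as a strong-stable saturated $\epsilon$-annulus over the compact orbit $O(p)$: compactness of $O(p)$ together with the uniform, indeed base-point independent (Lemma \ref{constant}), contraction $\mu^s$ makes this $\epsilon$-annulus a genuinely trapped neighbourhood under the forward flow, and dually the corresponding unstable annulus under the backward flow. Since $E^s,E^u$ are $\mathcal{C}^\infty$, the associated manifolds are smooth, and the lemma follows.
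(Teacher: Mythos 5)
Your argument has a genuine gap at its foundation: you assume from the outset that $f=\phi^c_{T_0}$ for some $T_0\in\RR$, citing the proof outline. But that identity is precisely the conclusion of Proposition \ref{flow}, and the present lemma is a step \emph{toward} proving it (via the subsequent analysis of homoclinic trajectories). At this stage all that is known is that $f$ preserves the orbit foliation of $\phi^c_t$ and commutes with the flow ($f\circ\phi^c_t=\phi^c_t\circ f$ since $\la_c=1$); commuting with an Anosov flow does not make $f$ a time map of it. Once you grant yourself $f=\phi^c_{T_0}$, everything you prove becomes essentially tautological (every orbit is $f$-invariant, every flow-invariant set is $f$-invariant), so the proposal is circular rather than a proof.

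Removing that assumption exposes the two points that actually require work and that the paper addresses. First, you must produce a closed orbit fixed by some iterate of $f$: an arbitrary periodic orbit of $\phi^c_t$ need not be $f$-periodic. The paper uses that $\phi^c_t$ is a conservative Anosov flow, hence has only finitely many shortest closed orbits, and that $f$ maps closed center leaves to closed center leaves of the same length (it is an isometry along $E^c$); therefore $f$ permutes this finite set and some $f^k$ fixes one of them, $O(p)$. Second, the invariance of the \emph{local} weak stable and unstable manifolds of $O(p)$ under $f^{\pm k}$ cannot be read off from flow invariance; the paper argues that the only $f^k$-invariant local sets near $O(p)$ are $W^{cs}_{loc}(p,f^k)$ and $W^{cu}_{loc}(p,f^k)$, so $f^k$ permutes the pair $\{W^s_{loc}(O(p),\phi^c_t),W^u_{loc}(O(p),\phi^c_t)\}$, and one replaces $t$ by $-t$ if necessary to get the stated inclusions. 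Your constant time rescaling to give $O(p)$ length one is fine and matches the paper, but the rest of the argument needs to be rebuilt along these lines.
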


\begin{proof} As noted above, $\phi^c_t$ is conservative. Since $\phi^c_t$ is a hyperbolic flow, there exists at most finitely many shortest closed orbits. Let $O(p)$ be one of these shortest closed curves.  Since  $f(O(p))$ is a compact leaf of the same length, $O(p)$ is a periodic curve of $f$. It follows that there is a positive integer $k$ such that $f^k(O(p))=O(p)$. We reparametrize the flow so that $O(p)$ has length $1$, i.e.\@ $\phi_1^c(z)=z\ \forall z\in O(p)$. 
Since the only $f^k$-invariant sets near $O(p)$ are $W^{cs}_{loc}(p,f^k)$ and $W^{cu}_{loc}(p,f^k)$ (the center stable and center unstable manifolds of $p$), we have that $f^k$ permutes the set $\{W^s_{loc}(O(p), \phi^c_t),W^u_{loc}(O(p), \phi^c_t)\}$, hence by changing $t$ by $-t$ if necessary we can assume that $f^k(W^s_{loc}(O(p), \phi^c_t))\subset W^s_{loc}(O(p), \phi^c_t)$ and $f^{-k}(W^u_{loc}(O(p), \phi^c_t))\subset W^u_{loc}(O(p), \phi^c_t)$.
\end{proof}

We continue working with $O(p)$ given in the lemma and assume that $f(O(p))=O(p)$ (so, the actual result is about $f^k$ and not $f$). Note that both $W^s(O(p),\phi^c_t)$ and $W^u(O(p,\phi^c_t))$ are cylinders over $O(p)$. We introduce (linearizing) coordinates $(\theta,x)$ in $W^s_{loc}(O(p), \phi^c_t)$ and $(\theta,y)$ in $W^u_{loc}(O(p), \phi^c_t)$ with $\theta\in \mathbb{R}/\mathbb{Z}$ and $x,y\in [-\lambda_u,\lambda_u]$. Consider the curves $\gamma_s=\{(\theta,x):x=1\},\gamma_u=\{(\theta,x):y=\lambda_u\}$ and note that they are transverse to $\phi^c_t$. Finally consider the fundamental domains $D^s\subset  W^s_{loc}(O(p), \phi^c_t), D^u\subset W^u_{loc}(O(p), \phi^c_t)$  delimited by $\gamma_s,f(\gamma_s)$ and $\gamma_u, f(\gamma_u)$ respectively. See the picture below.

\begin{figure}[ht]
	\centering
	\includegraphics[width=0.7\linewidth]{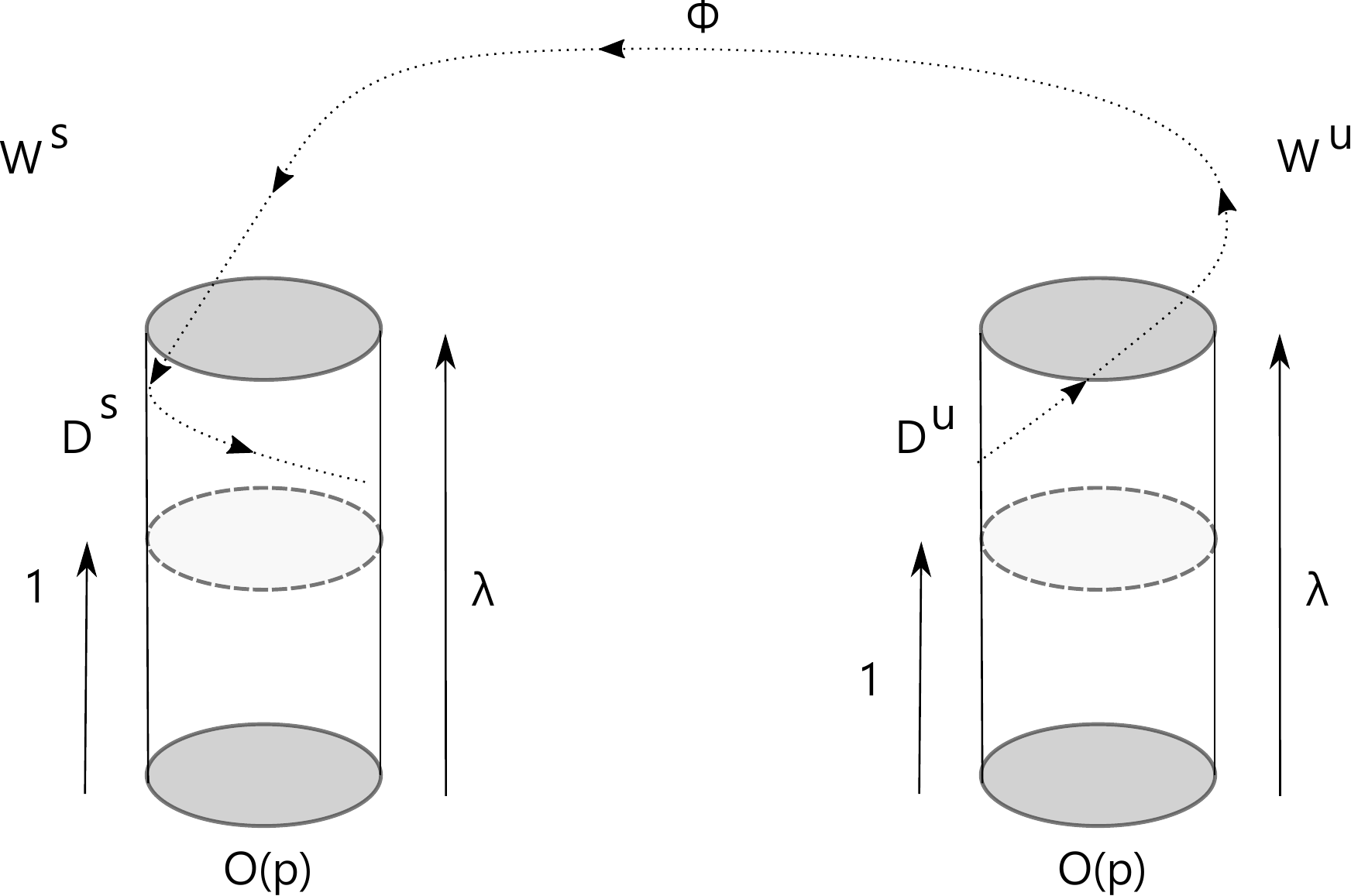}
	\caption{Construction of $D^s,D^u$. Here $\lambda=\lambda_u=\lambda_s^{-1}$.}
	\label{fig:drawing1}
\end{figure}

In the $(x,t)$ coordinates the flow $\phi^c_t$ is the solution to the differential equation $\dot{\theta}=1,\dot{x}=\alpha x$, and similarly for the $(\theta,y)$ coordinates. We deduce that $\phi^c_t$ is given by
\begin{align}
\begin{cases}
\theta\mapsto \theta+t\\
x\mapsto xe^{\alpha t}
\end{cases}\quad \text{in } W^u_{loc}(O(p), \phi)\\
\begin{cases}
\theta\mapsto \theta+t\\
y\mapsto ye^{\beta t}
\end{cases}\quad \text{in } W^s_{loc}(O(p), \phi)	
\end{align}
On the other hand, the diffeomorphism $f$ acts in the vertical coordinates simply by multiplying by $\la_u,\la_s$, 
\begin{align}
f(\theta,x)=(\theta',\lambda_ux)\\
f(\theta,y)=(\theta',\lambda_sy)
\end{align}

We now consider the  homoclinic trajectories of $\phi^c_t$ connecting $f(\gamma_u)$ with $\gamma_s$.

\begin{lemma}
Any such homoclinic trajectory is fixed by $f$.	
\end{lemma}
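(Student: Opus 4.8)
The plan is to exploit that, because $\lambda_c=1$, equation \eqref{comm2} with $\sigma=c$ reads $f\circ\phi^c_t=\phi^c_t\circ f$, so that $f$ is a symmetry of the Anosov flow $\phi^c_t$ which commutes with it. Since $f(O(p))=O(p)$ and, after passing to the chosen iterate, $f$ preserves $W^s_{loc}(O(p),\phi^c_t)$ and $W^u_{loc}(O(p),\phi^c_t)$ by the previous lemma, it preserves the global invariant manifolds $W^s(O(p))$ and $W^u(O(p))$, and hence maps the homoclinic locus $W^s(O(p))\cap W^u(O(p))$ to itself. In particular, if $H$ is one of the homoclinic trajectories under consideration, then $f(H)$ is again a homoclinic trajectory of $\phi^c_t$. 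Two orbits of a flow that share a single point coincide, so it will suffice to exhibit one point of $H$ that also lies on $f(H)$; this is how I would establish $f(H)=H$.

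To locate such a point I would work in the linearizing coordinates introduced before the statement. Write $a=H\cap f(\gamma_u)$ on the unstable cylinder and $b=H\cap\gamma_s$ on the stable cylinder, so that the homoclinic excursion gives $b=\phi^c_\tau(a)$ for the (positive) transition time $\tau$ of $H$. Applying $f$ and using commutation yields $f(b)=\phi^c_\tau(f(a))$, so $f(H)$ realizes the \emph{same} excursion time $\tau$ but with $f(a)\in f^2(\gamma_u)$ and $f(b)\in f(\gamma_s)$: that is, $f$ translates the connection by exactly one fundamental domain on each side. In these coordinates $f$ acts on the unstable cylinder by $(\theta,u)\mapsto(\theta+c,\lambda_u u)$ and $\phi^c_t$ by $(\theta,u)\mapsto(\theta+t,ue^{\alpha t})$, where $c$ is the rotation of $f|_{O(p)}$ along the length-one orbit $O(p)$. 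Comparing $f(a)$ with the unique point of $H$ at the same unstable height $\lambda_u\cdot(\text{height of }a)$, one sees that $f(a)\in H$, and hence $f(H)=H$, precisely when $c\equiv \ln\lambda_u/\alpha \pmod 1$: the $\theta$-shift produced by climbing one $f$-step in the unstable direction must equal the flow time needed to climb the same height. Thus the whole statement reduces to this rotation-matching identity.

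Establishing that identity is the step I would carry out last, using the \emph{direct} character of the connections under consideration. The excursion realizing the passage from $f(\gamma_u)$ to $\gamma_s$ is the distinguished one leaving the local unstable manifold and entering the local stable manifold without intermediate returns; applying $f$ sends it to the distinguished connection from $f^2(\gamma_u)$ to $f(\gamma_s)$, while flowing $H$ forward by $\ln\lambda_u/\alpha$ produces the distinguished connection starting one unstable level higher. Both describe the same geometric excursion of the same orbit, and matching them forces the per-level flow increment to equal the $f$-induced $\theta$-shift. Equivalently, reading $f$ through the first return to $\gamma_s$ gives an orientation-preserving circle homeomorphism of $O(p)$ that commutes with the uniform contraction of $f$ on $W^s(O(p))$ and must fix the discrete, $f$-permuted family of direct-connection points bounded by $D^s$ and $D^u$; such a map is the identity on those orbits, which is exactly $c\equiv\ln\lambda_u/\alpha$.

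The \emph{main obstacle} is precisely this rotation-matching. The commutation relation alone is compatible with replacing $f$ by $f$ composed with a rigid rotation along $O(p)$, so one cannot conclude $f(H)=H$ from \eqref{comm2} and invariance of $W^{s},W^{u}$ by themselves. What must be used — and where I expect the careful argument above to be needed — is the rigidity of the homoclinic connection: transversality of $W^s(O(p))$ and $W^u(O(p))$ along $H$, together with the fact that the direct excursions between the fundamental domains $D^u$ and $D^s$ form a discrete, consistently oriented, $f$-permuted family, is what removes the spurious rotational freedom and pins $c$ to $\ln\lambda_u/\alpha$.
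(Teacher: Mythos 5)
Your reduction is sound as far as it goes: since $\lambda_c=1$ the map $f$ commutes with $\phi^c_t$, it preserves $W^{s}(O(p))$ and $W^{u}(O(p))$, hence it permutes the homoclinic trajectories, and $f(H)=H$ does come down to a matching condition between the rotation $c$ of $f|_{O(p)}$ and $\ln\lambda_u/\alpha$ modulo $1$. The problem is that your argument for that matching identity is circular. When you assert that $f(H)$ and the connection obtained by flowing $H$ forward by $\ln\lambda_u/\alpha$ ``describe the same geometric excursion of the same orbit,'' you are assuming exactly the conclusion $f(H)=H$. The fallback argument is not valid either: a homeomorphism that permutes a discrete family need not fix its elements, and, more seriously, the set of homoclinic points of $O(p)$ inside a fundamental domain such as $D^s$ is not finite --- for a transitive Anosov flow $W^u(O(p))\cap D^s$ is countable and dense. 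One only obtains a finite, hence permutable-with-control, set after imposing a bound on the transition time $L$ of the connection, and nothing in your proposal controls how $L$ changes when $f$ is applied; a priori $f$ could send a ``direct'' connection to one with a strictly longer excursion, and the $f$-orbit of $H$ inside the homoclinic web could be infinite. As you yourself note, the soft inputs (commutation, invariance of the local invariant manifolds, transversality, discreteness) do not by themselves exclude this.

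That missing quantitative step is precisely the content of the paper's proof. Working in the same linearizing coordinates, the paper tracks the transition times $L_j$ of the successive images of a homoclinic connection between the fundamental domains and shows that $L_j-L_0$ equals, up to a fixed multiplicative constant, $\ln(x_jy_j)-\ln(x_0y_0)$, where $x_j,y_j$ are the unstable and stable heights of the entry and exit points; since the products $x_jy_j$ lie in the fixed compact interval $[1,\lambda^2]$, the times $L_j$ are uniformly bounded, so only finitely many homoclinic trajectories can arise, and from this finiteness the conclusion follows. To repair your write-up you would need to supply an estimate of this kind (or an equivalent one); the reduction to the identity $c\equiv\ln\lambda_u/\alpha \pmod 1$ is a reasonable reformulation, but as written the identity is never actually proved.
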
	

\begin{proof}
For a homoclinic trajectory $O(q)$ as before we denote $X(q)\in f(\gamma_u)\cap O(q)$, $L(q)=$ smallest time such that $Y(q)=\phi_{L(q)}(X(q))\in \gamma_s$ and we observe that a given $M>0$ the number of homoclinic trajectories $O(q)$ with $L(q)\leq M$ is finite, hence, as $f$ is isometry in the flow direction, it suffices to show that the possibles $L(q)$ are bounded. 

Take an homoclinic curve $O(q_0)$ of minimal length and denote $x_0,y_0$ the second coordinates of $X(q_0),Y(q_0)$. Let $k_0\in \mathbb{Z}$ be the smallest integer such that $f^k_0(X(q_0))\in D^s$ and define $Y_1=f^k_0(X(q_0))$, and $X_1$ the point in $f(\gamma_u)\cap O(Y_1)$ of minimal length, which we denote by $L_1$ (i.e.\@ $\phi_{L_1}(X_1)=Y_1$). Similarly, $x_1,y_1$ denote the second coordinates of $X_1,Y_1$ respectively.

The oriented orbit segment joining $Y_1=f^{k_0}(X_0)$ with $f^{k_0}(X_0)$ is completely contained in $W^s_{loc}(O(p),\phi^c_t)$ and has length $L_0$ (because $f$ is an isometry in the flow direction), thus we deduce
\[
\lambda^{-k_0}y_0=y_1e^{\beta L_0}
\]
On the other hand and arguing analogously, the oriented orbit segment joining $f^{-k_0}(X_1)$ with $X_0$ is completely contained in $W^u_{loc}(O(p),\phi^c_t)$ and has length $L_1$, hence
\[
x_0e^{\beta L_1}=\lambda^{-k_0}x_1,
\] 
thus combining the two previous equations we deduce
\[
\frac{y_1}{y_0}e^{\beta L_0}=\frac{x_0}{x_1}e^{\beta L_1}\Rightarrow e^{\beta(L_1-L_0)}=\frac{x_1y_1}{x_0y_0}
\]
We now argue inductively (with the natural definition for $x_j,y_j$) and obtain 
\begin{align*}
&L_j-L_{j-1}=\alpha(\ln x_jy_j-\ln x_{j-1}y_{j-1} )\\
&\Rightarrow L_j-L_0=\alpha(\ln x_jy_j-\ln x_{0}y_{0}) 	\quad\forall j\geq 1.
\end{align*}
Noting that $x_jy_j\in [1,\lambda^2]$ for every $j$ we obtain that $L_j$ is bounded in $j$, as claimed.
\end{proof}

We are ready to finish the proof.

\begin{proof}[Proof of Proposition \ref{flow}]
 It follows that $f$ fixes an orbit $O(q)\neq O(p)$ homoclinic to $O(p)$. It follows that there is  $T$ positive such that $\phi_T^c(X_0)=f^{k_0}(X_0)$, hence $\lambda_s^k.y_0= \exp(\beta.T)\cdot y_0$ which implies $\lambda_s= \exp(\beta. \frac{T}{k})$, and using that $\lambda_u=\lambda^{-1}_s$ we get $\lambda_u=\exp(\al.T)$. Finally, using the linearizing coordinates we deduce that $Df=D\phi_{\frac{T}{k}}^c$, and since $f$ fixes two orbits in these coordinates, $f=\phi_{\frac{T}{k}}^c$ in $W^u(O(p),\phi^c_t)\cup W^s(O(p),\phi^c_t)$, which implies, since the stable and unstable manifolds of $O(p)$ are dense, that $f=\phi_{\frac{T}{k}}^c$ on $M$.
\end{proof}

\section*{Acknowledgments}

The authors would like to thank the referee for all the valuable input and the corrections that improved the manuscript.
\bibliographystyle{alpha}
\bibliography{bibliografia}

\begin{thebibliography}{RHRHTU12}

\bibitem[AVW15]{AVW}
A.~Avila, M.~Viana, and A.~Wilkinson.
\newblock {A}bsolute continuity, {L}yapunov exponents and rigidity {I}:
  {G}eodesic flows.
\newblock {\em Journal of the European Mathematical Society}, 17(6):1435--1462,
  2015.

\bibitem[BFFP18]{BFFP}
T.~Barthelm{\'{e}}, S.~Fenley, S.~Frankel, and R.~Potrie.
\newblock Partially hyperbolic diffeomorphisms homotopic to the identity on
  3-manifolds.
\newblock {\em preprint at arXiv:1908.06227}, 2018.

\bibitem[BGP16]{BGP}
C.~Bonatti, A.~Gogolev, and R.~Potrie.
\newblock Anomalous partially hyperbolic diffeomorphisms {II}: stably ergodic
  examples.
\newblock {\em Inventiones mathematicae}, 206(3):801--836, 2016.

\bibitem[BPP16]{BPP}
C.~Bonatti, K.~Parwani, and R.~Potrie.
\newblock Anomalous partially hyperbolic diffeomorphisms {I}: dynamically
  coherent examples.
\newblock {\em Annales Scientifiques de l'École Normale Supérieure},
  44(6):1387--1402, 2016.

\bibitem[BW05]{Tranph}
C.~Bonatti and A.~Wilkinson.
\newblock {T}ransitive partially hyperbolic diffeomorphisms on 3-manifolds.
\newblock {\em Topology}, 44(3):475--508, 2005.

\bibitem[BZ19]{BZ}
C.~Bonatti and J.~Zhang.
\newblock Transitive partially hyperbolic diffeomorphisms with one-dimensional
  neutral center.
\newblock {\em preprint at arXiv:1904.05295}, 2019.

\bibitem[CRHRHU17]{CHHU}
P.~D. Carrasco, F.~Rodriguez-Hertz, J.~Rodriguez-Hertz, and R.~Ures.
\newblock Partially hyperbolic dynamics in dimension three.
\newblock {\em Ergodic Theory and Dynamical Systems}, 38(8):2801--2837, 2017.

\bibitem[Fra68]{franksthe}
J.~Franks.
\newblock Anosov diffeomorphisms.
\newblock In Amer.~Math. Soc., editor, {\em Global Analysis}, volume~14 of {\em
  Proc. Sympos. Pure Math.}, pages 61--93, 1968.

\bibitem[Ghy87]{ghyss}
E.~Ghys.
\newblock {F}lots d'{A}nosov dont les feuilletages stables sont
  diff{\'{e}}rentiables.
\newblock {\em Annales scientifiques de l{\'{E}}cole normale sup{\'{e}}rieure},
  20(2):251--270, 1987.

\bibitem[GKS19]{Gogolev2019}
A.~Gogolev, B.~Kalinin, and V.~Sadovskaya.
\newblock Center foliation rigidity for partially hyperbolic toral
  diffeomorphisms.
\newblock {\em preprint at arXiv:1908.03177}, 2019.

\bibitem[Gog17]{Gogolev2017}
A.~Gogolev.
\newblock Bootstrap for local rigidity of {A}nosov automorphisms on the
  3-torus.
\newblock {\em Communications in Mathematical Physics}, 352(2):439--455, 2017.

\bibitem[Gog18]{Gogolev2016}
A.~Gogolev.
\newblock {S}urgery for partially hyperbolic dynamical systems {I}. {B}low-ups
  of invariant submanifolds.
\newblock {\em Geometry and Topology}, 22(4):2219--2252, 2018.

\bibitem[HP14]{HP1}
A.~Hammerlindl and R.~Potrie.
\newblock Pointwise partial hyperbolicity in three-dimensional nilmanifolds.
\newblock {\em Journal of the London Mathematical Society}, 89(3):853--875,
  2014.

\bibitem[HP15]{HP2}
A.~Hammerlindl and R.~Potrie.
\newblock Classification of partially hyperbolic diffeomorphisms in 3-manifolds
  with solvable fundamental group.
\newblock {\em Journal of Topology}, 8(3):842--870, 2015.

\bibitem[HP16]{HPSurv}
A.~Hammerlindl and R.~Potrie.
\newblock Partial hyperbolicity and classification: a survey.
\newblock {\em Ergodic Theory and Dynamical Systems}, 38(02):401--443, 2016.

\bibitem[Pot18]{Po}
R.~Potrie.
\newblock Robust dynamics, invariant structures and topological classification.
\newblock In {\em Proceedings of the International Congress of Mathematicians
  (ICM 2018)}, pages 2063--2085, 2018.

\bibitem[RHRHTU12]{maxmeas}
F.~Rodriguez-Hertz, J.~Rodriguez-Hertz, A.~Tahzibi, and R.~Ures.
\newblock Maximizing measures for partially hyperbolic systems with compact
  center leaves.
\newblock {\em Ergodic Theory and Dynamical Systems}, 32(2):825–839, 2012.

\bibitem[SY19]{SY}
R.~Sagin and J.~Yang.
\newblock Lyapunov exponents and rigidity of {A}nosov automorphisms and skew
  products.
\newblock {\em Advances in Mathematics}, 355, 2019.

\bibitem[Var18]{V}
R.~Varao.
\newblock Rigidity for partially hyperbolic diffeomorphisms.
\newblock {\em Ergodic Theory and Dynamical Systems}, 38(8):3188–3200, 2018.

\end{thebibliography}

\end{document}